\newcommand\Tstrut{\rule{0pt}{2.6ex}}             
\newcommand\Bstrut{\rule[-0.9ex]{0pt}{0pt}}  
\newcommand{\cL}{\mathcal{L}}                     
\newcommand{\cH}{\mathcal{H}}                     
\newtheorem{lemma}{Lemma}
\newtheorem{proposition}[lemma]{Proposition}
\newtheorem{theorem}[lemma]{Theorem}
\newtheorem{question}{Question}
\theoremstyle{remark}
\newtheorem{remark}{Remark}
\author{Audace A. V. Dossou-Olory\thanks{Supported in part by Stellenbosch University in association with the African Institute for Mathematical Sciences (South Africa).} \and Stephan Wagner\thanks{Supported by the National Research Foundation of South Africa, grant 96236.}}
\title{On the inducibility of small trees}
\affiliation{
  Department of Mathematical Sciences, Stellenbosch University, South Africa
}
\keywords{inducibility, binary tree, ternary tree, leaf-induced subtree, maximum density, $d$-ary tree, topological tree}
\begin{document}

\publicationdetails{21}{2019}{4}{14}{5381}

\maketitle

\begin{abstract}
The quantity that captures the asymptotic value of the maximum number of appearances of a given topological tree (a rooted tree with no vertices of outdegree $1$) $S$ with $k$ leaves in an arbitrary tree with sufficiently large number of leaves is called the inducibility of $S$. Its precise value is known only for some specific families of trees, most of them exhibiting a symmetrical configuration. In an attempt to answer a recent question posed by Czabarka, Sz{\'e}kely, and the second author of this article, we provide bounds for the inducibility $J(A_5)$ of the $5$-leaf binary tree $A_5$ whose branches are a single leaf and the complete binary tree of height $2$. It was indicated before that $J(A_5)$ appears to be `close' to $1/4$. We can make this precise by showing that \begin{math}0.24707\ldots \leq J(A_5) \leq 0.24745\ldots\end{math}. Furthermore, we also consider the problem of determining the inducibility of the tree $Q_4$, which is the only tree among $4$-leaf topological trees for which the inducibility is unknown.
\end{abstract}

\section{Introduction and previous results}

The study of graph inducibility was brought forward in 1975 by Pippenger and Golumbic, who investigated the maximum frequency of $k$-vertex simple graphs occurring as subgraphs within a graph whose number of vertices approaches infinity -- see~\cite{pippenger1975inducibility} for details and first results on the inducibility of graphs. To this day, there is substantial activity regarding this concept. In analogy to~\citep{pippenger1975inducibility}, the inducibility of a rooted tree $S$ with $k$ leaves is defined as the maximum frequency at which $S$ can appear as a subtree induced by $k$ leaves of an arbitrary rooted tree whose number of leaves tends to infinity~\citep{czabarka2016inducibility,AudaceStephanPaper1, DossouOloryWagnerPaper3}. \citet{bubeck2016local} defined the inducibility of a tree $S$ with $k$ vertices as the maximum proportion of $S$ as a subtree among all $k$-vertex subtrees of a tree whose number of vertices tends to infinity. We also mention that \citet{sperfeld2011inducibility} extended the concept of inducibility to monodirected graphs, and also gave bounds (using Razborov's flag algebra method) for some graphs with at most four vertices.

For any of the aforementioned notions of inducibility, can the exact inducibility of trees (graphs) with a moderate size always be determined explicitly? The answer to this question turns out to be either undecidable or negative in general in the original context of simple graphs~\citep{exoo1986dense,sperfeld2011inducibility,hirst2014inducibility,even2015note,bubeck2016local}. The concept of inducibility of a tree with $k$ leaves is still new and the precise value of the inducibility is known only for a few classes of trees, most of them exhibiting a symmetrical configuration. The recent paper~\citep{czabarka2016inducibility} raised some questions on the inducibility of binary trees, one of which is discussed and approximately solved within this note. The present paper also covers a related problem concerning the inducibility of a ternary tree with four leaves.

\medskip
Since the inducibility of trees is a quantity that was only introduced recently, let us first turn to a preliminary account on the subject.

A rooted tree without vertices of outdegree $1$ will be called a \emph{topological} tree as in~\citep{bergeron1998combinatorial,allman2004mathematical,DossouOloryWagnerPaper3}. We are concerned with topological trees with a given number of leaves. If, in addition, every vertex has $d~(\geq 2)$ or fewer children, then the tree will be called a \emph{$d$-ary} tree as in~\citep{AudaceStephanPaper1}. Instead of $2$-ary tree and $3$-ary tree, we shall simply say \emph{binary} tree and \emph{ternary} tree, respectively.

A \emph{leaf-induced subtree} of a topological tree $T$ is any subtree produced in the following three steps: consider a subset $L$ of leaves of $T$; take the minimal subtree containing all the leaves in $L$; suppress all vertices whose outdegree is $1$.

An illustration of this process of finding a leaf-induced subtree of $T$ is shown in Figure~\ref{leaf-induced}. For a topological tree $T$, we shall denote its number of leaves by $|T|$. 

\begin{figure}[!h]\centering  
	\begin{tikzpicture}[thick]
	\node [circle,draw] (r) at (0,0) {};
	
	\draw (r) -- (-2,-2);
	\draw (r) -- (0,-4);
	\draw (r) -- (2,-2);
	\draw (-2,-2) -- (-2.5,-4);
	\draw (-2,-2) -- (-1.5,-4);
	\draw (2,-2) -- (1,-4);
	\draw (2,-2) -- (2,-4);
	\draw (2,-2) -- (3,-4);
	\draw (1.25,-3.5) -- (1.5,-4);
	
	\node [fill,circle, inner sep = 2pt ] at (-2.5,-4) {};
	\node [fill,circle, inner sep = 2pt ] at (-1.5,-4) {};
	\node [fill,circle, inner sep = 2pt ] at (0,-4) {};
	\node [fill,circle, inner sep = 2pt ] at (1,-4) {};
	\node [fill,circle, inner sep = 2pt ] at (1.5,-4) {};
	\node [fill,circle, inner sep = 2pt ] at (2,-4) {};
	\node [fill,circle, inner sep = 2pt ] at (3,-4) {};
	
	\node at (-1.5,-4.5) {$\ell_1$};
	\node at (0,-4.5) {$\ell_2$};
	\node at (1,-4.5) {$\ell_3$};
	\node at (2,-4.5) {$\ell_4$};
	
	\node [circle,draw] (r1) at (6,-2) {};
	
	\draw (r1) -- (5,-4);
	\draw (r1) -- (6,-4);
	\draw (r1) -- (7,-4);
	\draw (6.75,-3.5)--(6.5,-4);
	
	\node [fill,circle, inner sep = 2pt ] at (5,-4) {};
	\node [fill,circle, inner sep = 2pt ] at (6,-4) {};
	\node [fill,circle, inner sep = 2pt ] at (6.5,-4) {};
	\node [fill,circle, inner sep = 2pt ] at (7,-4) {};
	
	\node at (5,-4.5) {$\ell_1$};
	\node at (6,-4.5) {$\ell_2$};
	\node at (6.5,-4.5) {$\ell_3$};
	\node at (7,-4.5) {$\ell_4$};
	
	\end{tikzpicture}
	\caption{A ternary tree $T$ (left) and the subtree induced by the set of leaves $\{\ell_1,\ell_2,\ell_3,\ell_4\}$ of $T$ (right).}\label{leaf-induced}
\end{figure}
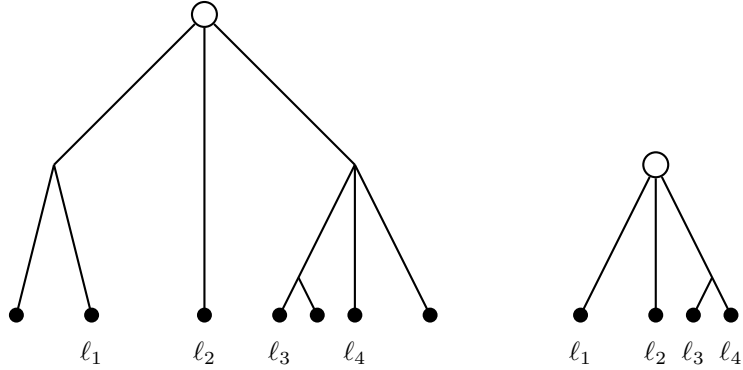
	
By \emph{density} of a topological tree $S$ in $T$, we mean the proportion of all subsets of $|S|$ leaves of $T$ that induce a leaf-induced subtree isomorphic (in the sense of rooted tree isomorphism) to $S$. We shall denote this density by $\gamma(S,T)$. Thus, it makes sense to set $\gamma(S,T)=0$ for $|S|>|T|$.

The \emph{inducibility} of $S$ (as defined and studied in~\citep{DossouOloryWagnerPaper3}) is its maximum density as a leaf-induced subtree of $T$ as the size of $T$ tends to infinity:
\begin{align*}
J(S):=\limsup_{\substack{|T| \to \infty \\T~\text{topological tree}}} \gamma(S,T)=\lim_{n\to \infty} \max_{\substack{|T|=n \\T~\text{topological tree}}} \gamma(S,T)\,.
\end{align*}
The limit is known to exist; see~\citet[Theorem 3]{DossouOloryWagnerPaper3}.

Similarly, when the underlying set over which the supremum is taken is restricted to $d$-ary trees, we define
\begin{align*}
I_d(D):=\limsup_{\substack{|T| \to \infty \\ T~\text{$d$-ary tree}}} \gamma(D,T)=\lim_{n\to \infty} \max_{\substack{|T|=n \\T~\text{$d$-ary tree}}} \gamma(D,T)
\end{align*}
to be the inducibility of a $d$-ary tree $D$ in $d$-ary trees (again, the limit is known to exist---\citet[Theorem 3]{AudaceStephanPaper1}). The subscript $d$ is used to emphasize the fact that we are taking the maximum over the set of all $d$-ary trees.

\medskip
The initial motivation for studying the quantities $J(S)$ and $I_d(D)$ was twofold: first, they are natural tree analogues of the notion of inducibility in graphs (as outlined above), which is classical in graph theory. The other was a concrete application \citep{czabarka2016inducibility} to structures called \emph{tanglegrams}, which consist of two binary trees entangled by a perfect matching between the leaves. In order to estimate the crossing number of random tanglegrams, it was necessary to find (asymptotic) bounds on the number $\gamma(S,T)$.

\medskip
While in the past many results on the inducibility were obtained for graphs, this is not yet the case for trees and many challenging questions remain. The problem of computing the inducibility of a tree appears to be quite difficult even for trees with a small number of leaves---already the inducibilities of some trees with only four or five leaves are not known. The only cases for which an explicit expression is presently known are caterpillars and trees that are highly balanced, thus close to complete $d$-ary trees; cf. papers \citet{czabarka2016inducibility,AudaceStephanPaper1,DossouOloryWagnerPaper2,DossouOloryWagnerPaper3}. The reason why they are manageable is that the trees $T$ for which the maximum
\begin{align*}
\max_{\substack{|T|=n \\T~\text{$d$-ary tree}}} \gamma(D,T)
\end{align*}
is attained have a simple structure (caterpillars and essentially complete $d$-ary trees) in these cases. In general, this structure appears to be much harder to determine, which is why we have to settle for upper and lower bounds in this paper.

Among $5$-leaf binary trees, the tree $A_5$ (see Figure~\ref{treeA5}) is the only one for which the inducibility has not been determined yet. Also, the inducibility of the $4$-leaf ternary tree $Q_4$ shown in Figure~\ref{treeQ4} is unknown. Thus, these are the smallest cases for which we do not have explicit expressions.

\begin{figure}[htbp]\centering  
	\subfigure[The binary tree $A_5$.\label{treeA5}]{\begin{tikzpicture}[thick,level distance=9mm]
		\tikzstyle{level 1}=[sibling distance=19mm]
		\tikzstyle{level 2}=[sibling distance=16mm]
		\tikzstyle{level 3}=[sibling distance=8mm]
		\node [circle,draw]{}
		child {[fill] circle (2pt)}
		child {child {child {[fill] circle (2pt)}child {[fill] circle (2pt)}}child {child {[fill] circle (2pt)}child {[fill] circle (2pt)}}};
		\end{tikzpicture}}
	\hfil
 \subfigure[The ternary tree $Q_4$.\label{treeQ4}]{\begin{tikzpicture}[thick,level distance=12mm]
 	\tikzstyle{level 1}=[sibling distance=20mm]
 	\tikzstyle{level 2}=[sibling distance=9mm]
 	\node [circle,draw]{}
 	child {[fill] circle (2pt)}
 	child {child {[fill] circle (2pt)}child {[fill] circle (2pt)}child {[fill] circle (2pt)}};
 	\end{tikzpicture}}	
	\caption{The topological trees $A_5$ and $Q_4$.}\label{A5andP4}
\end{figure}
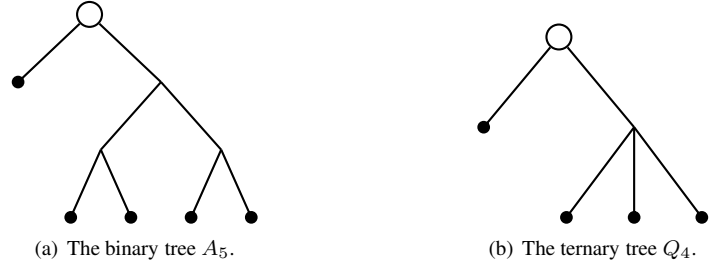

In earlier papers~\citep{AudaceStephanPaper1,DossouOloryWagnerPaper2,DossouOloryWagnerPaper3}, various lower bounds were given on the inducibility of topological trees and thus the inducibilities of $Q_4$ and $A_5$. In this note, we shall propose constructions that yield improved lower bounds on the inducibility of the two trees $Q_4$ and $A_5$. Moreover, using a computer search, we shall be able to bound both the inducibility of $A_5$ in topological trees and the inducibility of $Q_4$ in ternary trees from above.

\medskip
The inducibility of some families of topological trees is known precisely. As such, we have stars, binary caterpillars~\citep{AudaceStephanPaper1}, complete $d$-ary trees and more generally, the so-called even $d$-ary trees~\citep{DossouOloryWagnerPaper2}. We already know the inducibility of all topological trees with at most three leaves: each of them has inducibility $1$, except for the star with three leaves, which has inducibility $(d-2)/(d+1)$ in $d$-ary trees. There are only five different topological trees with four leaves (see Figure~\ref{fourleaves}), and the precise inducibility of four of them is at least partially known:
\begin{align*}
J\big(CD^2_2\big)&=I_d\big(CD^2_2\big)=\frac{3}{7}~~\text{for all}~d~~\text{\citep{czabarka2016inducibility,DossouOloryWagnerPaper2}},\\
J\big(F^2_4\big)&=I_d\big(F^2_4\big)=1~~\text{for all}~d~~\text{\citep{czabarka2016inducibility,AudaceStephanPaper1}}\,,\\
J(S_4)&=1~~\text{\citep{AudaceStephanPaper1}}\,,\\
I_d(S_4)&=\frac{(d-2)(d-3)}{d^2+d+1}~~\text{for all}~d~~\text{\citep{AudaceStephanPaper1}}\,,\\
I_3\big(E^3_4\big)&=\frac{6}{13}~~\text{\citep{DossouOloryWagnerPaper2}}\,,\\
I_d\big(E^3_4\big)&=\text{unknown for}~d>3\,.
\end{align*}

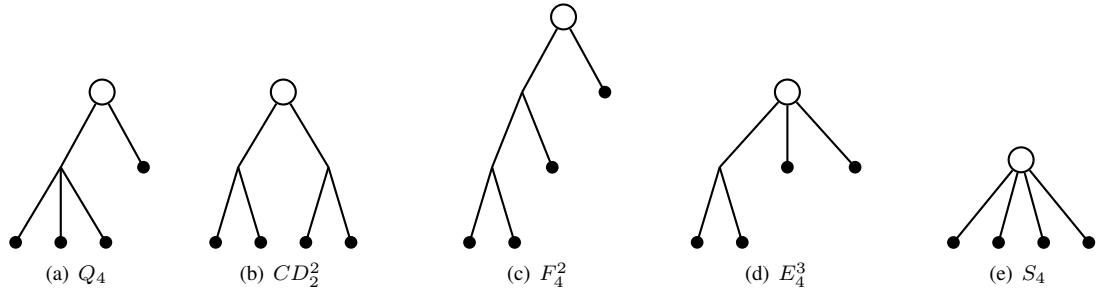
\begin{figure}[htbp] \centering
	\subfigure[$Q_4$]{	\begin{tikzpicture}[thick,level distance=10mm]
		\tikzstyle{level 1}=[sibling distance=11mm]
		\tikzstyle{level 2}=[sibling distance=6mm]
		\node [circle,draw]{}
		child {child {[fill] circle (2pt)}child {[fill] circle (2pt)}child {[fill] circle (2pt)}}
		child {[fill] circle (2pt)};
		\end{tikzpicture}}\qquad
	\subfigure[$CD^2_2$]{	\begin{tikzpicture}[thick,level distance=10mm]
		\tikzstyle{level 1}=[sibling distance=12mm]
		\tikzstyle{level 2}=[sibling distance=6mm]
		\node [circle,draw]{}
		child {child {[fill] circle (2pt)} child {[fill] circle (2pt)}}
		child {child {[fill] circle (2pt)} child {[fill] circle (2pt)}};
		\end{tikzpicture}}\qquad \qquad
	\subfigure[$F^2_4$]{\begin{tikzpicture}[thick,level distance=10mm]
		\tikzstyle{level 1}=[sibling distance=11mm]
		\tikzstyle{level 2}=[sibling distance=8mm]
		\tikzstyle{level 3}=[sibling distance=6mm]
		\node [circle,draw]{}
		child {child {child {[fill] circle (2pt)} child {[fill] circle (2pt)}} child {[fill] circle (2pt)}}
		child {[fill] circle (2pt)};
		\end{tikzpicture}}\qquad \quad
	\subfigure[$E^3_4$]{\begin{tikzpicture}[thick,level distance=10mm]
		\tikzstyle{level 1}=[sibling distance=9mm]
		\tikzstyle{level 2}=[sibling distance=6mm]
		\node [circle,draw]{}
		child {child {[fill] circle (2pt)}child {[fill] circle (2pt)}}
		child {[fill] circle (2pt)}
		child {[fill] circle (2pt)};
		\end{tikzpicture}}\qquad \quad
	\subfigure[$S_4$]{	\begin{tikzpicture}[thick,level distance=11mm]
	\tikzstyle{level 1}=[sibling distance=6mm]
	\node [circle,draw]{}
	child {[fill] circle (2pt)}
	child {[fill] circle (2pt)}
	child {[fill] circle (2pt)}
	child {[fill] circle (2pt)};
	\end{tikzpicture}}
	\caption{All the topological trees with four leaves.}\label{fourleaves}
\end{figure}

When considering binary trees, we notice that there are only three isomorphism types of $5$-leaf trees -- see Figure~\ref{Thetreeswithfiveleaves} -- and the inducibility of two of them has been determined:
\begin{align*}
&J\big(E^2_5\big)=I_d\big(E^2_5\big)=\frac{2}{3}~~
\text{\citep{czabarka2016inducibility,DossouOloryWagnerPaper2}}\,,\\
&J\big(F^2_5\big)=I_d\big(F^2_5\big)=1~~
\text{\citep{czabarka2016inducibility,AudaceStephanPaper1}}
\end{align*}
for all $d$. The inducibility of the binary tree $A_5$ is of particular interest to us, since it is the smallest binary tree for which the inducibility is not known explicitly. In~\citet{czabarka2016inducibility}, the authors considered the problem of computing the inducibility of the tree $A_5$ in binary trees, and mentioned that $I_2(A_5)$ appears to be close to $1/4$. This observation came from a computer experiment, but no explicit sequence of binary trees that would yield a value close to $0.25$ in the limit was given. Here we provide a construction which yields the value $0.24707\ldots$ as a lower bound. We also describe how to perform an efficient computer search and obtain $0.24745\ldots$ as an upper bound on $I_2(A_5)$.

\begin{figure}[!h]\centering
	\subfigure[]{\begin{tikzpicture}[thick,level distance=10mm]
		\tikzstyle{level 1}=[sibling distance=11mm]
		\tikzstyle{level 2}=[sibling distance=5mm]
		\node [circle,draw]{}
		child {child {[fill] circle (2pt)}child {[fill] circle (2pt)}child {[fill] circle (2pt)}child {[fill] circle (2pt)}}
		child {[fill] circle (2pt)};
		\end{tikzpicture}}\qquad \qquad 
	\subfigure[]{\begin{tikzpicture}[thick,level distance=10mm]
		\tikzstyle{level 1}=[sibling distance=14mm]
		\tikzstyle{level 2}=[sibling distance=9mm]
		\tikzstyle{level 3}=[sibling distance=5mm]
		\node [circle,draw]{}
		child {child {child {[fill] circle (2pt)}child {[fill] circle (2pt)}child {[fill] circle (2pt)}}child {[fill] circle (2pt)}}child {[fill] circle (2pt)};
		\end{tikzpicture}}\qquad \qquad 
	\subfigure[]{	\begin{tikzpicture}[thick,level distance=10mm]
		\tikzstyle{level 1}=[sibling distance=15mm]
		\tikzstyle{level 2}=[sibling distance=5mm]
		\node [circle,draw]{}
		child {child {[fill] circle (2pt)}child {[fill] circle (2pt)}}child {child {[fill] circle (2pt)}child {[fill] circle (2pt)}child {[fill] circle (2pt)}};
		\end{tikzpicture}}\qquad \qquad 
  \subfigure[]{\begin{tikzpicture}[thick,level distance=10mm]
  	\tikzstyle{level 1}=[sibling distance=14mm]
  	\tikzstyle{level 2}=[sibling distance=7mm]
  	\tikzstyle{level 3}=[sibling distance=5mm]
  	\node [circle,draw]{}
  	child {child {child {[fill] circle (2pt)}child {[fill] circle (2pt)}}child {[fill] circle (2pt)}child {[fill] circle (2pt)}}
  	child {[fill] circle (2pt)};
  	\end{tikzpicture}}\newline \newline
  \subfigure[$E^2_5$]{\begin{tikzpicture}[thick,level distance=10mm]
   	\tikzstyle{level 1}=[sibling distance=14mm]
   	\tikzstyle{level 2}=[sibling distance=7mm]
   	\tikzstyle{level 3}=[sibling distance=5mm]
   	\node [circle,draw]{}
   	child {child {child {[fill] circle (2pt)}child {[fill] circle (2pt)}}child {[fill] circle (2pt)}}
   	child {child {[fill] circle (2pt)}child {[fill] circle (2pt)}};
   	\end{tikzpicture}}\qquad \qquad 
 \subfigure[$F^2_5$]{\begin{tikzpicture}[thick,level distance=9mm]
 	\tikzstyle{level 1}=[sibling distance=14mm]
 	\tikzstyle{level 2}=[sibling distance=10mm]
 	\tikzstyle{level 3}=[sibling distance=8mm]
 	\tikzstyle{level 4}=[sibling distance=5mm]
 	\node [circle,draw]{}
 	child {child {child {child {[fill] circle (2pt)}child {[fill] circle (2pt)}}child {[fill] circle (2pt)}}child {[fill] circle (2pt)}}
 	child {[fill] circle (2pt)};
 	\end{tikzpicture}}\qquad \qquad 
 \subfigure[$A_5$]{
  	\begin{tikzpicture}[thick,level distance=9mm]
  	\tikzstyle{level 1}=[sibling distance=14mm]
  	\tikzstyle{level 2}=[sibling distance=9mm]
  	\tikzstyle{level 3}=[sibling distance=5mm]
  	\node [circle,draw]{}
  	child {child {child {[fill] circle (2pt)}child {[fill] circle (2pt)}}child {child {[fill] circle (2pt)}child {[fill] circle (2pt)}}}
  	child {[fill] circle (2pt)};
  	\end{tikzpicture}}\qquad \qquad 
  \subfigure[]{\begin{tikzpicture}[thick,level distance=9mm]
   	\tikzstyle{level 1}=[sibling distance=10mm] 
   	\tikzstyle{level 2}=[sibling distance=5mm]
   	\node [circle,draw]{}
   	child {child {[fill] circle (2pt)}child {[fill] circle (2pt)}child {[fill] circle (2pt)}}
   	child {[fill] circle (2pt)}
   	child {[fill] circle (2pt)};
   	\end{tikzpicture}}\newline \newline
    \subfigure[]{\begin{tikzpicture}[thick,level distance=10mm]
    	\tikzstyle{level 1}=[sibling distance=11mm]
    	\tikzstyle{level 2}=[sibling distance=8mm]
    	\tikzstyle{level 3}=[sibling distance=5mm]
    	\node [circle,draw]{}
    	child {child {child {[fill] circle (2pt)}child {[fill] circle (2pt)}}child {[fill] circle (2pt)}}
    	child {[fill] circle (2pt)}
    	child {[fill] circle (2pt)};
    	\end{tikzpicture}}\qquad \quad 
	 \subfigure[]{\begin{tikzpicture}[thick,level distance=9mm]
	 	\tikzstyle{level 1}=[sibling distance=11mm]
	 	\tikzstyle{level 2}=[sibling distance=5mm]
	 	\node [circle,draw]{}
	 	child {child {[fill] circle (2pt)}child {[fill] circle (2pt)}}
	 	child {child {[fill] circle (2pt)}child {[fill] circle (2pt)}}
	 	child {[fill] circle (2pt)};
	 	\end{tikzpicture}}\qquad \quad 
	\subfigure[]{	\begin{tikzpicture}[thick,level distance=10mm]
	  	\tikzstyle{level 1}=[sibling distance=8mm]
	  	\tikzstyle{level 2}=[sibling distance=5mm]
	  	\node [circle,draw]{}
	  	child {child {[fill] circle (2pt)}child {[fill] circle (2pt)}}
	  	child {[fill] circle (2pt)}
	  	child {[fill] circle (2pt)}
	  	child {[fill] circle (2pt)};
	  	\end{tikzpicture}}\qquad \quad 
	  \subfigure[]{\begin{tikzpicture}[thick,level distance=11mm]
	  	\tikzstyle{level 1}=[sibling distance=6mm]
	  	\node [circle,draw]{}
	  	child {[fill] circle (2pt)}
	  	child {[fill] circle (2pt)}
	  	child {[fill] circle (2pt)}
	  	child {[fill] circle (2pt)}
	  	child {[fill] circle (2pt)};
	  	\end{tikzpicture}}
	\caption{All the $5$-leaf topological trees.}\label{Thetreeswithfiveleaves}
\end{figure}
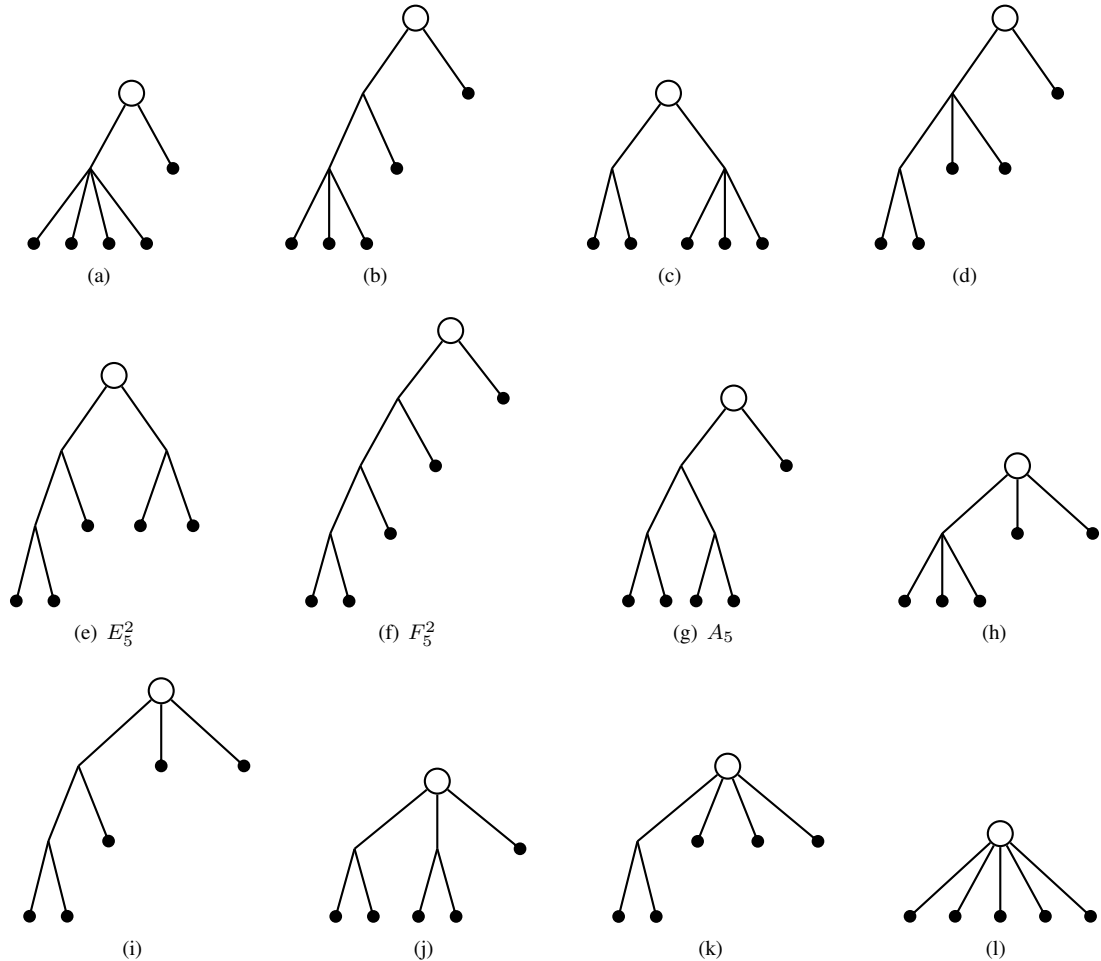

\medskip
In the second part of this work, we consider the problem of finding the inducibility of the ternary tree $Q_4$ in ternary trees. Specifically, we prove that \begin{math}0.1418\ldots \leq I_3(Q_4) \leq 0.1435 \ldots\end{math}. These two trees that we focus on exhibit a non-symmetrical configuration, which makes the computation of their inducibilities harder. For the binary tree $A_5$, we are tempted to conjecture that our candidate is an optimal sequence of binary trees giving the explicit value of $I_2(A_5)$ in the limit, which we obtain as a function of the global maximum of a certain three-variable polynomial over a specific domain.

\section{Statement of results}
Paper~\citep{DossouOloryWagnerPaper3} covers, among other things, the relationship between the degree-restricted inducibility $I_d(S)$ in $d$-ary trees and the general inducibility $J(S)$ in topological trees at large. It was proved in~\citep{DossouOloryWagnerPaper3} that 
\begin{align*}
J(S)= \lim_{d\to \infty} I_d(S)\,.
\end{align*}

A $d$-ary tree will be called a \emph{strictly} $d$-ary tree if each of its internal vertices has exactly $d$ children. By a result in~\citep[Theorem 5]{AudaceStephanPaper1}, we also know that the underlying set over which the maximum density in $d$-ary trees is taken can be reduced to strictly $d$-ary trees, that is
\begin{align*}
I_d(S)=\lim_{n \to \infty} \max_{\substack{|T|=n \\ T~\text{strictly $d$-ary tree}}}\gamma(S,T)\,.
\end{align*}

\medskip
In~\citep{czabarka2016inducibility}, the authors formulated some questions and conjectures on the inducibility in binary trees, one of which was solved recently in~\citep{AudaceStephanPaper1}. Among the questions posed, one of them asks for the inducibility of the $5$-leaf binary tree $A_5$ (see Figure~\ref{A5andP4}). As mentioned in the introduction, this problem appears to be quite hard and finding a sequence of binary trees that yields $I_2(A_5)$ in the limit also appears to be a difficult task. \citet{czabarka2016inducibility} further mentioned that $I_2(A_5)$ is close to $1/4$, which will be made more precise here with the following result:

\begin{theorem}\label{lowerbforA5}
	For the binary tree $A_5$, we have
	\begin{align*}
	0.247071 \leq J(A_5)=I_2(A_5) \leq \frac{32828685715097}{132667832500200} \approx 0.247450\,.
	\end{align*}
\end{theorem}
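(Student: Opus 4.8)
The statement bundles three claims, and I would establish them in turn: the identity $J(A_5)=I_2(A_5)$, the lower bound $J(A_5)\ge 0.247071$, and the upper bound $I_2(A_5)\le 32828685715097/132667832500200$. I would dispose of the identity first. Since binary trees are topological trees, taking maxima gives $J(A_5)\ge I_2(A_5)$ immediately (this also follows from $J(S)=\lim_{d\to\infty}I_d(S)$ together with $I_2\le I_3\le\cdots$). For the reverse inequality I would binarize: given any topological tree $T$, replace every vertex of outdegree $\ge 3$ by an arbitrary binary resolution, obtaining a binary tree $T'$ with the same leaf set. The point is that $A_5$ is itself binary, so in any $5$-leaf subset inducing $A_5$ every relevant lowest common ancestor carries leaves from exactly two of its child branches; such binary meetings are preserved under any resolution. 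Hence every $5$-set that induces $A_5$ in $T$ still induces $A_5$ in $T'$, so the number of copies does not decrease and $\gamma(A_5,T')\ge\gamma(A_5,T)$ (the leaf number is unchanged). Taking maxima and $|T|\to\infty$ yields $I_2(A_5)\ge J(A_5)$, hence equality. From here on it suffices to work with (strictly) binary trees.

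The combinatorial engine for both bounds is a root decomposition. Writing $B:=CD^2_2$ for the complete binary tree of height $2$, observe that in a binary tree a $4$-leaf subset induces $B$ if and only if it splits $2\mid 2$ at its lowest common ancestor; consequently the number $\beta(X)$ of $B$-inducing $4$-subsets of a binary tree $X$ equals $\sum_{w}\binom{\ell_w}{2}\binom{r_w}{2}$, summed over internal vertices $w$ with child-leaf-counts $\ell_w,r_w$. Since $A_5$ is a single leaf joined to a copy of $B$, a $5$-set induces $A_5$ exactly when, at its lowest common ancestor, one side contributes a single leaf and the other contributes a $B$-inducing $4$-set. For a binary tree $T$ with root branches $L,R$ of $\ell$ and $r$ leaves this gives
\begin{align*}
N(A_5,T)=N(A_5,L)+N(A_5,R)+\ell\,\beta(R)+r\,\beta(L),
\end{align*}
and dividing by $\binom{|T|}{5}$ yields, in the limit $|T|\to\infty$ with root split $\lambda:1-\lambda$,
\begin{align*}
\gamma(A_5,T)\approx\lambda^5\gamma(A_5,L)+(1-\lambda)^5\gamma(A_5,R)+5\lambda(1-\lambda)^4\gamma(B,R)+5(1-\lambda)\lambda^4\gamma(B,L).
\end{align*}

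For the lower bound I would exhibit an explicit self-similar family. The simplest version—iterating a single binary split $\lambda:1-\lambda$ into two scaled copies of itself—already lets one solve the resulting linear recursions in closed form (for instance one recovers $\gamma(B,\cdot)\to 3/7$ at $\lambda=\tfrac12$), but optimizing its single parameter falls well short of $0.247$. I would therefore use a richer self-similar construction: iterate substitution into a fixed small binary template whose leaf-slots are filled by scaled copies in proportions $x,y,z,1-x-y-z$. Plugging the self-similar ansatz into the identities above turns the densities $\gamma(A_5,\cdot)$ and $\gamma(B,\cdot)$ into the solutions of a small linear system, so $\lim\gamma(A_5,\cdot)$ becomes an explicit rational function of the proportions; after clearing the (constant-sign) denominator this is the three-variable polynomial to be maximized over the relevant domain. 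Locating its global maximum and evaluating there gives $\gamma(A_5,\cdot)\to 0.247071\ldots\le J(A_5)$.

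The upper bound is the main obstacle, and a naive use of the root decomposition is far too weak: bounding $\gamma(A_5,\cdot)\le I_2(A_5)$ and $\gamma(B,\cdot)\le 3/7$ in the displayed recursion and maximizing over $\lambda$ only yields $I_2(A_5)\le 3/7$ (the supremum is approached as $\lambda\to 0$). To obtain a genuinely tight bound I would pass to a finite, flag-algebra-type relaxation: track the limiting densities of all binary trees with at most $m$ leaves (for a suitable fixed $m$), impose the linear constraints coming from the sub-sampling identities $\gamma(S,\cdot)=\sum_{|S'|=m}\gamma(S,S')\gamma(S',\cdot)$ and from the root decomposition, and maximize $\gamma(A_5,\cdot)$ by linear programming. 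Solving this program in exact rational arithmetic, and exhibiting the optimal dual solution as a certificate, produces the stated value $32828685715097/132667832500200$ as a rigorous upper bound; the large denominator is exactly what one expects from the exact optimum of a rational linear program. The delicate points are choosing $m$ large enough to bring the bound down to $0.2474\ldots$ while keeping the program tractable, and certifying the computer output rigorously rather than merely numerically.
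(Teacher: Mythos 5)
Your reduction of $J(A_5)$ to $I_2(A_5)$ by binary resolution is sound (and is a legitimate self-contained substitute for the paper's citation of an earlier result), and your root-decomposition recursions for $c(A_5,\cdot)$ and $c(CD_2^2,\cdot)$ are exactly the ones the paper uses. The two quantitative claims, however, are not established by what you describe. For the lower bound, your construction is genuinely different from the paper's and, as described, does not reach $0.247071$. The paper applies the four-slot caterpillar template \emph{once} and fills the slots with \emph{even binary trees} $E^2_{n_i}$, precisely because these maximize the $CD_2^2$-density ($\gamma(CD_2^2,E^2_n)\to 3/7$) and the dominant contribution to $c(A_5,\cdot)$ is (number of leaves outside a slot) times $c(CD_2^2,\text{slot})$; the resulting objective $F(x_1,x_2,x_3)$ is a polynomial with known coefficients $1/7$ and $3/7$, not the solution of a fixed-point system. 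Your self-similar iteration forces each slot to be a scaled copy of the whole tree, so its $CD_2^2$-density is itself determined by the fixed point and is well below $3/7$ whenever the proportions are skewed enough to favour $A_5$: at the paper's optimal proportions $(0.0253,0.0514,0.788,0.135)$ the self-consistent $CD_2^2$-density is only about $0.14$ and the resulting $A_5$-density about $0.12$, while balanced proportions give roughly $0.20$. You assert the value $0.247071$ without solving your linear system or carrying out the optimization; there is a real tension (high $CD_2^2$-density forces the tree towards an even binary tree, which has $A_5$-density $1/7$) that your ansatz cannot escape but the paper's two-layer construction does.

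For the upper bound, the number $\tfrac{32828685715097}{132667832500200}$ is not the output of a flag-algebra relaxation: its denominator equals $\binom{2000}{5}/2$, which identifies it as the \emph{exact} value of $\max_{|T|=2000}\gamma(A_5,T)$, and the paper's proof consists of computing this exact maximum (using $I_2(A_5)\le\max_{|T|=n}\gamma(A_5,T)$ for each fixed $n$). The entire difficulty, which your proposal does not address, is making that computation feasible: the paper's Lemmas~\ref{lem:two_terms}--\ref{lem:aux3} and Theorem~\ref{Thm:forAlgo} show that an optimal tree may be assumed to have every fringe subtree on the upper envelope of the planar point set $\{(c(A_5,T),c(CD_2^2,T))\}$, which prunes the dynamic programme down to short lists $\mathcal{L}(n)$. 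Your LP alternative is moreover not worked out even in outline: the root-decomposition identity involves products such as $\lambda^5\gamma(A_5,L)$ and so cannot be imposed as a linear constraint, you specify no value of $m$, and you give no evidence that the relaxation would close the gap to $0.2474$. As it stands, neither the lower nor the upper bound is proved.
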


\medskip
As part of the ingredients needed to prove this result, let us define a new class of binary trees (which is already considered in recent papers~\citep{czabarka2016inducibility,DossouOloryWagnerPaper2}).

\medskip
The \emph{even} binary tree $E^2_n$ with $n$ leaves is obtained recursively as follows:
\begin{itemize}
	\item $E^2_1$ is the tree with only one vertex;
	\item for $n>1$, the branches of $E^2_n$ are the even binary trees $E^2_{\lfloor n/2 \rfloor}$ and $E^2_{\lceil n/2 \rceil}$.
\end{itemize}
An example of an even binary tree can be found in Figure~\ref{E2.7}.
\begin{figure}[htbp]\centering  
	\begin{tikzpicture}[thick,level distance=10mm]
	\tikzstyle{level 1}=[sibling distance=25mm]
	\tikzstyle{level 2}=[sibling distance=11mm]
	\tikzstyle{level 3}=[sibling distance=6.5mm]
	\node [circle,draw]{}
	child {child {[fill] circle (2pt)}child {child {[fill] circle (2pt)}child {[fill] circle (2pt)}}}
	child {child {child {[fill] circle (2pt)}child {[fill] circle (2pt)}}child {child {[fill] circle (2pt)}child {[fill] circle (2pt)}}};
	\end{tikzpicture}
	\caption{The even binary tree $E^2_7$ with seven leaves.}\label{E2.7}
\end{figure}
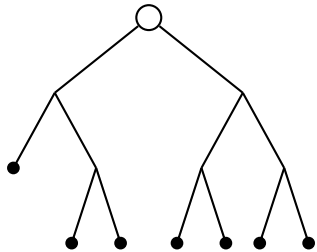

We shall prove the upper bound in Theorem~\ref{lowerbforA5} by means of an algorithmic approach. For the lower bound, we shall make use of the binary tree $S(n_1,n_2,n_3,n_4)$ whose rough picture is shown in Figure~\ref{Gene Tree for A2,5}, where each triangle represents an even binary tree. More specifically, to obtain the tree $S(n_1,n_2,n_3,n_4)$, we take the $4$-leaf binary tree whose internal vertices form a path beginning at the root (the square vertex on top in Figure~\ref{Gene Tree for A2,5}), and identify the four leaves with the even binary trees whose number of leaves is $n_1,n_2,n_3,n_4$, respectively in this order (starting with the top leaf attached to the root). 

\begin{figure}[htbp]\centering
	\begin{tikzpicture}[]
	\draw (3.,13.)-- (5.,11.);
	\draw (5.,11.)-- (7.,9.);
	\draw (7.,9.)-- (9.,7.);
	\draw (9.,7.)-- (10.,6.);
	\draw (3.,13.)-- (2.,12.);
	\draw (5.,11.)-- (4.,10.);
	\draw (7.,9.)-- (6.,8.);
	\draw (9.,7.)-- (8.,6.);
	\draw (8.,6.)-- (10.,6.);
	\draw (5.,7.)-- (7.,7.);
	\draw (6.,8.)-- (5.,7.);
	\draw (6.,8.)-- (7.,7.);
	\draw (4.,10.)-- (3.,9.);
	\draw (4.,10.)-- (5.,9.);
	\draw (3.,9.)-- (5.,9.);
	\draw (2.,12.)-- (1.,11.);
	\draw (2.,12.)-- (3.,11.);
	\draw (1.,11.)-- (3.,11.);
	\draw (1.7245451798215154,10.800158178264521) node[anchor=north west] {$n_1$};
	\draw (3.745707296485689,8.854825855992067) node[anchor=north west] {$n_2$};
	\draw (5.762097589752763,6.8509872984632985) node[anchor=north west] {$n_3$};
	\draw (8.778612588146123,5.768621946221482) node[anchor=north west] {$n_4$};
	\begin{scriptsize}
	\draw [fill=black] (3.,13.) ++(-4.0pt,0 pt) -- ++(4.0pt,4.0pt)--++(4.0pt,-4.0pt)--++(-4.0pt,-4.0pt)--++(-4.0pt,4.0pt);
	\draw [fill=black] (2.,12.) circle (2.0pt);
	\draw [fill=black] (5.,11.) circle (2.5pt);
	\draw [fill=black] (4.,10.) circle (2.0pt);
	\draw [fill=black] (7.,9.) circle (2.5pt);
	\draw [fill=black] (6.,8.) circle (2.0pt);
	\draw [fill=black] (9.,7.) circle (2.5pt);
	\draw [fill=black] (8.,6.) circle (1.5pt);
	\draw [fill=black] (10.,6.) circle (1.5pt);
	\draw [fill=black] (1.,11.) circle (1.5pt);
	\draw [fill=black] (3.,11.) circle (1.5pt);
	\draw [fill=black] (3.,9.) circle (1.5pt);
	\draw [fill=black] (5.,9.) circle (1.5pt);
	\draw [fill=black] (5.,7.) circle (1.5pt);
	\draw [fill=black] (7.,7.) circle (1.5pt);
	\end{scriptsize}
	\end{tikzpicture}
	\caption{The binary tree $S(n_1,n_2,n_3,n_4)$ described for Theorem~\ref{lowerbforA5}.}\label{Gene Tree for A2,5}
\end{figure}
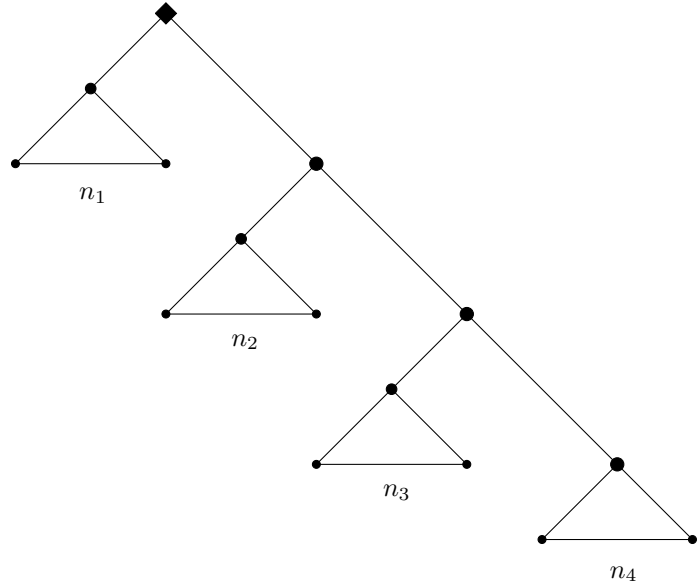

As a next step, we set up a formula for the number of copies of $A_5$ in $S(n_1,n_2,n_3,n_4)$; this formula is used together with a result on even binary trees from~\citet{DossouOloryWagnerPaper2} to derive an asymptotic formula for \begin{math}c(A_5,S(n_1,n_2,n_3,n_4))\end{math} as \begin{math}n=n_1+n_2+n_3+n_4\to \infty\end{math}. Finally, we compute (at least approximately) the global maximum of the main term in the asymptotic formula of the density \begin{math}\gamma(A_5,S(n_1,n_2,n_3,n_4))\end{math} in the region defined by \begin{math}0<n_1,n_2,n_3,n_4<n\end{math} and \begin{math}n_1+n_2+n_3+n_4=n\end{math}.

As a closing comment, when we consider five or more even binary trees instead of four in the tree configuration of Figure~\ref{Gene Tree for A2,5}, we do not seem to get a better lower bound. We therefore expect our construction to be best possible.

\medskip
Among the topological trees with fewer than five leaves, the $4$-leaf ternary tree $Q_4$ (Figure~\ref{A5andP4}) is the only one for which we are yet to determine an exact inducibility. What is the inducibility of $Q_4$ (at least in ternary trees)? In what follows, we shall derive a lower and upper bound on $I_3(Q_4)$. Our second main theorem reads as follows:

\begin{theorem}\label{accurate lower bound for P4}
	For the ternary tree $Q_4$, we have
	\begin{align*}
	0.141827 \approx \frac{59}{416} \leq I_3(Q_4) \leq \frac{73848853}{514606225} \approx 0.143506\,.
	\end{align*}
\end{theorem}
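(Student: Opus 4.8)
```latex
The plan is to mirror the two-sided strategy already advertised for Theorem~\ref{lowerbforA5}: a constructive lower bound coming from an explicit family of ternary trees, and an algorithmic upper bound coming from a finite flag-algebra-style computation. Since we are now working inside ternary trees, the relevant quantity is the density $\gamma(Q_4,T)$ where $Q_4$ is the $4$-leaf tree with one leaf and one $3$-leaf star as its branches at the root.

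For the lower bound $\tfrac{59}{416}\le I_3(Q_4)$, I would exhibit a concrete sequence of strictly ternary trees (this reduction is legitimate by \citet[Theorem 5]{AudaceStephanPaper1}) whose $Q_4$-density tends to $59/416$. The natural candidate, in analogy with the tree $S(n_1,n_2,n_3,n_4)$ used for $A_5$, is a tree built from a short ``spine'' of internal vertices near the root, each of whose pendant subtrees is an \emph{even ternary tree} (the ternary analogue of $E^2_n$, with $n$ leaves split as evenly as possible into three sub-branches). I would parametrise the branch sizes, say by fractions $x_1,x_2,\dots$ of the total number of leaves $n$, and set up an exact count $c(Q_4,T)$ by a case analysis: a copy of $Q_4$ is determined by choosing the three leaves forming the cherry and the single fourth leaf so that the induced tree is $Q_4$. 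Using the known structure of even ternary trees from~\citep{DossouOloryWagnerPaper2} (in particular the asymptotic densities of small subtrees in even $d$-ary trees), each summand in $c(Q_4,T)$ becomes a polynomial in the $x_i$ with an explicit leading order in $n$. Normalising by $\binom{n}{4}$ and maximising the resulting polynomial over the simplex $x_i>0$, $\sum x_i=1$ should pin down the optimal split and yield the rational value $59/416$; the clean denominator strongly suggests that the optimum is attained at a rational point of the simplex, which I would verify by solving the stationarity (Lagrange) conditions exactly rather than numerically.

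For the upper bound $I_3(Q_4)\le \tfrac{73848853}{514606225}$, I would run the same finite computer search described for Theorem~\ref{lowerbforA5}, adapted to ternary trees. The idea is to bound the global density of $Q_4$ by a weighted average of its densities in all ternary trees of some fixed moderate size $m$: since any large tree decomposes into its root branches, $\gamma(Q_4,T)$ satisfies a convex-combination/flag-type inequality that lets one reduce the supremum to a finite linear (or semidefinite) optimisation over densities of subtrees with at most $m$ leaves. Enumerating the finitely many ternary trees up to size $m$, computing the matrix of pairwise induced-subtree densities, and solving the associated linear program with exact rational arithmetic produces a certified rational upper bound; the appearance of the large explicit fraction $73848853/514606225$ is exactly the signature of such an exact LP certificate.

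The main obstacle will be the lower-bound optimisation rather than the upper-bound search. Identifying the correct spine length and the correct configuration of even-ternary branches is delicate: as noted for $A_5$, lengthening the spine or adding more branches need not improve the bound, so I would have to argue (or at least check computationally for a few more branches) that the four-branch-style configuration is genuinely optimal and that no qualitatively different construction beats $59/416$. Carrying out the exact maximisation of the multivariate polynomial on the simplex --- confirming that the interior critical point is the global maximum and that its value is precisely $59/416$ --- is where the real work lies; the upper bound, by contrast, is mechanical once the size-$m$ enumeration and rational LP are in place.
```
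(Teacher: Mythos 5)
Your lower-bound half is essentially the paper's own argument, and it works. In fact the optimal construction needs no spine at all: the paper takes the tree $T_{n_1,n_2}$ whose two branches are even ternary trees with $n_1=\lfloor xn\rfloor$ and $n_2=\lfloor(1-x)n\rfloor$ leaves, reduces the density to the one-variable polynomial $f(x)=\tfrac{1}{312}\bigl(1+9x-33x^2+48x^3-24x^4\bigr)$ whose derivative factors as $-(2x-1)(4x-3)(4x-1)/104$, and reads off the maximum $f(1/4)=59/9984$, i.e.\ $4!\cdot 59/9984=59/416$, realised concretely by the trees $W^3_h$ with branches $CD^3_h$ and $CD^3_{h+1}$. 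Your plan (even ternary branches, exact stationarity on the simplex, rational critical point) would land on exactly this; the only caveat is that you leave the choice of configuration open, whereas the two-branch case already suffices.

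The genuine gap is in the upper bound. The constant $\tfrac{73848853}{514606225}$ is precisely $b_{500}=\max_{|T|=500}\gamma(Q_4,T)$, used via the monotone bound $I_3(Q_4)\le\max_{|T|=n}\gamma(Q_4,T)$ of \citet[Theorem 3]{AudaceStephanPaper1}. Your proposal to ``enumerate the finitely many ternary trees up to size $m$'' and solve a linear program cannot reach this number: Table~\ref{tab:treeP4} shows $b_{15}=18/91\approx 0.198$ and $b_{100}\approx 0.150$, so one must push $m$ to about $500$ before the bound drops to $0.1435$, and the number of ternary topological trees with $500$ leaves is far beyond enumeration. The missing idea is the device that makes $n=500$ tractable, namely the dynamic program of Section~\ref{sec:ALGO}: no trees are enumerated; instead one maintains, for each leaf count $k$, only the Pareto/upper-envelope set $\cL(k)$ of pairs $\bigl(c(Q_4,T),c(S_3,T)\bigr)$, pruning candidates by the exchange arguments of Lemmas~\ref{lem:aux1}--\ref{lem:aux3}, which rest on the observation (Lemma~\ref{lem:two_terms}) that replacing a fringe subtree changes $c(Q_4,\cdot)$ affinely in the pair of statistics of the replaced subtree. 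A genuine flag-algebra SDP at small $m$ is a conceivable alternative route, but you neither carry it out nor give any reason it would certify a bound as strong as the one stated, and it would in any case produce a different constant; as written, the upper bound is not established.
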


\medskip
The proof of the lower bound in Theorem~\ref{accurate lower bound for P4} is accomplished by an explicit construction (as in  Theorem~\ref{lowerbforA5}), while the upper bound is obtained by means of a computer search. We defer them to Section~\ref{Proof2}.

\medskip
The \emph{star} with $k$ leaves is obtained by joining $k$ distinct vertices to a new vertex (the root of the star). We shall denote it with the symbol $S_k$.

The \emph{complete $d$-ary tree of height $h$} is the strictly $d$-ary tree in which the distance from every leaf to the root is $h$. Such a tree has $d^h$ leaves in total and shall be denoted with the symbol $CD^d_h$.

\medskip
For a positive integer $k\geq 3$, denote by $Q_{k}$ the tree whose branches are $S_{k-1}$ and $S_1$ (the single leaf). The following proposition will serve as an intermediary result to proving a new lower bound on the inducibility of the tree $Q_4$. Its proof will be given in Section~\ref{Proof2}.

\begin{proposition}\label{prop: copy of P4}
	For all positive integers $d,k,$ and $h$ such that $d \geq 2$ and $k \geq 3$, the formula
	\begin{align*}
	c\big(Q_k,CD^d_h \big)=\frac{(d-1) \binom{d}{k-1}}{d^{k-1}-d}\cdot d^h \Bigg(\frac{d^{(k-1) h}-d^{k-1}}{d^{k-1}-1}-\frac{d^h-d}{d-1} \Bigg)
	\end{align*}
	holds. In particular, we have
	\begin{align*}
	I_d(Q_k)\geq \frac{k! (d-1) \binom{d}{k-1}}{(d^{k-1}-d) (d^{k-1}-1)}
	\end{align*}
	for every $d \geq 2$ and every $k\geq 3$.
\end{proposition}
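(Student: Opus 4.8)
The plan is to count the leaf-induced copies of $Q_k$ in $CD^d_h$ directly, exploiting the rigid structure of $Q_k$. Every such copy is determined by a $k$-element leaf set whose induced subtree is $Q_k$; since $Q_k$ has a unique internal vertex of outdegree $k-1$ (the center of its $S_{k-1}$-branch) and a unique leaf adjacent to the root, such a leaf set decomposes canonically into a single leaf $\ell$ together with $k-1$ ``star'' leaves. I would set up a bijection between copies of $Q_k$ and configurations consisting of a vertex $u$ (the least common ancestor of the $k-1$ star leaves, which is the image of the star center), the $k-1$ star leaves, and the leaf $\ell$. For the induced tree to be exactly $Q_k$, the $k-1$ star leaves must lie in $k-1$ distinct child-subtrees of $u$ (so that the subtree induced below $u$ is $S_{k-1}$), while $\ell$ must be a leaf outside the subtree rooted at $u$ (so that the least common ancestor of $\ell$ and $u$ is a strict ancestor of $u$, which becomes the root of the copy after suppression). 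One should check that this correspondence is indeed a bijection: the star center and the distinguished leaf are recoverable from the induced tree because $\ell$ sits at depth $1$ and the star leaves at depth $2$ in $Q_k$, so no configuration is counted twice and none is missed.

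Next I would carry out the count by summing over the level $i$ of $u$. Taking the root to be at level $0$ and the leaves at level $h$, a vertex $u$ at level $i$ (with $1 \le i \le h-1$) has $d^{h-i}$ leaf-descendants, and each of its children roots a copy of $CD^d_{h-i-1}$ with $d^{h-i-1}$ leaves. Hence the number of configurations with center at level $i$ is
\[
d^i \cdot \binom{d}{k-1}\bigl(d^{h-i-1}\bigr)^{k-1}\cdot\bigl(d^h - d^{h-i}\bigr),
\]
the four factors accounting respectively for the choice of $u$, the choice of the $k-1$ child-subtrees hosting the star leaves, the choice of one leaf in each (which automatically forces the center to have outdegree $k-1$), and the choice of the outside leaf $\ell$. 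Summing over $i$ yields $c(Q_k,CD^d_h) = \binom{d}{k-1}\sum_{i=1}^{h-1} d^i\, d^{(k-1)(h-i-1)}\bigl(d^h-d^{h-i}\bigr)$.

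It then remains to evaluate this sum in closed form. After the substitution $j = h-1-i$ the summand splits into two geometric progressions with ratios $d^{k-2}$ and $d^{k-1}$; summing them and using the factorization $d^{k-1}-d = d\,(d^{k-2}-1)$ collapses the expression into the stated formula. I expect this algebraic simplification to be the most tedious---though entirely routine---step, and it can be cross-checked on small instances (for example $d=2$, $k=3$, $h=2$ gives $c=4$, matching a direct enumeration of the three-element leaf sets in $CD^2_2$).

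Finally, for the ``in particular'' bound I would use that $CD^d_h$ is a (strictly) $d$-ary tree, so that $\gamma(Q_k,CD^d_h) = c(Q_k,CD^d_h)/\binom{d^h}{k}$ is a lower bound for $\max_{|T|=d^h}\gamma(Q_k,T)$; letting $h\to\infty$ along the subsequence $n=d^h$ and invoking the existence of the limit defining $I_d(Q_k)$ gives $I_d(Q_k)\ge \lim_{h\to\infty}\gamma(Q_k,CD^d_h)$. Since $\binom{d^h}{k}\sim d^{hk}/k!$ and the dominant term of $c(Q_k,CD^d_h)$ is $\frac{(d-1)\binom{d}{k-1}}{(d^{k-1}-d)(d^{k-1}-1)}\,d^{hk}$ (the competing term of order $d^{2h}$ being negligible because $k\ge 3$), the limit equals exactly $\frac{k!\,(d-1)\binom{d}{k-1}}{(d^{k-1}-d)(d^{k-1}-1)}$, as claimed. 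The only genuine subtleties here are verifying that the structural correspondence is a clean bijection and confirming the leading-order dominance; the rest is bookkeeping and geometric summation.
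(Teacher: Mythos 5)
Your argument is correct, and it reaches the formula by a genuinely different route from the paper. The paper conditions on how the $k$ chosen leaves distribute among the $d$ branches of $CD^d_h$, obtains the recursion
\begin{align*}
c\big(Q_k,CD^d_h\big)=d\cdot c\big(Q_k,CD^d_{h-1}\big)+2\binom{d}{2}d^{h-1}c\big(S_{k-1},CD^d_{h-1}\big)\,,
\end{align*}
plugs in the known closed form for $c\big(S_{k-1},CD^d_{h-1}\big)$, and finishes by induction on $h$. You instead count copies directly by locating the image $u$ of the centre of the $S_{k-1}$-branch, observing that a $k$-set induces $Q_k$ exactly when $k-1$ of its leaves sit in distinct child-subtrees of $u$ (one each) and the remaining leaf lies outside $T[u]$, and then summing over the level of $u$; I checked that your sum
\begin{math}
\binom{d}{k-1}\sum_{i=1}^{h-1} d^{i}\, d^{(k-1)(h-i-1)}\big(d^h-d^{h-i}\big)
\end{math}
does simplify to the stated expression (and agrees with it on small cases). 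Your bijection is sound: since $k\ge 3$, the depth-one leaf of $Q_k$ is distinguishable from the $k-1$ depth-two leaves, so the decomposition of the leaf set is canonical and nothing is double-counted. The trade-off is mild: the paper's induction reuses the star-count formula from earlier work and fits the recursive machinery used elsewhere in the paper, whereas your direct count is self-contained and makes the combinatorial structure of a $Q_k$-copy more transparent, at the cost of an explicit geometric summation. The derivation of the ``in particular'' bound is the same in both: identify the leading term $\Theta(d^{kh})$ of the count (the competing term is $O(d^{2h})$, negligible since $k\ge 3$), divide by $\binom{d^h}{k}\sim d^{hk}/k!$, and pass to the limit $h\to\infty$.
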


The next proposition shows that the bounds mentioned in Theorems~\ref{lowerbforA5} and \ref{accurate lower bound for P4} are much better than the natural bounds provided by the complete $d$-ary trees, cf.~\citet{DossouOloryWagnerPaper2}.

\begin{proposition}
	For the trees $Q_4$ and $A_5$, we have
	\begin{align*}
	\lim_{h \to \infty}\gamma\big(Q_4,CD^3_h\big)=\frac{1}{13}
	\end{align*}
	and
	\begin{align*}
	\lim_{h \to \infty}\gamma\big(A_5,CD^2_h\big)=\frac{1}{7}\,.
	\end{align*}
\end{proposition}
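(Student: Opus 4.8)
The plan is to treat the two limits separately, in each case reducing the density to an exact count of leaf-subsets inducing the given tree divided by the total number $\binom{n}{|S|}$ of subsets, and then extracting the leading asymptotics as the height $h\to\infty$.

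For $Q_4$ this is essentially immediate from the results already available. Since $Q_4$ is precisely the tree $Q_k$ with $k=4$ (its branches are $S_3$ and the single leaf $S_1$), I would invoke Proposition~\ref{prop: copy of P4} with $d=3$ and $k=4$. Substituting these values gives an explicit closed form for $c\big(Q_4,CD^3_h\big)$, and dividing by $\binom{3^h}{4}$ yields $\gamma\big(Q_4,CD^3_h\big)$. As $h\to\infty$ only the terms of order $3^{4h}$ survive: the numerator behaves like $\tfrac{1}{12}\cdot 3^h\cdot\tfrac{3^{3h}}{26}=\tfrac{3^{4h}}{312}$ while $\binom{3^h}{4}\sim\tfrac{3^{4h}}{24}$, so the ratio tends to $\tfrac{24}{312}=\tfrac{1}{13}$. (This is exactly the chain of estimates underlying the lower bound $I_d(Q_k)\ge\cdots$ in that proposition, specialised to $d=3$, $k=4$.)

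For $A_5$ no ready-made formula is available, so I would derive recursions by splitting $CD^2_h$ at its root into two copies of $CD^2_{h-1}$. First introduce the auxiliary count $a_h=c\big(CD^2_2,CD^2_h\big)$: a four-leaf set inducing $CD^2_2$ either lies entirely in one branch or splits $2+2$ across the two branches, and in the latter case each pair of leaves automatically forms a cherry, so the induced tree is always $CD^2_2$. This gives
\[
a_h = 2\,a_{h-1} + \binom{2^{h-1}}{2}^{\!2}.
\]
For the main count $b_h=c\big(A_5,CD^2_h\big)$, a five-leaf set inducing $A_5$ either lies entirely in one branch, or has its induced root at the root of $CD^2_h$; in the latter case the root split must be $1+4$, with the single leaf on one side and the four leaves on the other side inducing $CD^2_2$. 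Choosing the side carrying the lone leaf, that leaf, and the $CD^2_2$-copy then gives
\[
b_h = 2\,b_{h-1} + 2\cdot 2^{h-1}\,a_{h-1}.
\]
Because the forcing terms grow faster than the homogeneous part $2^h$, their leading constants are determined independently of the base cases: solving yields $a_h\sim\tfrac{16^h}{56}$ and hence $b_h\sim\tfrac{32^h}{840}$, and dividing by $\binom{2^h}{5}\sim\tfrac{32^h}{120}$ gives the limit $\tfrac{120}{840}=\tfrac{1}{7}$.

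The only genuinely delicate point, and thus the step I expect to require the most care, is the bookkeeping in the $A_5$ recursion: one must verify that a copy whose induced root coincides with the root of $CD^2_h$ forces exactly a $1+4$ split whose large side itself induces $CD^2_2$ (and nothing else), while copies whose induced root lies strictly below the root are captured precisely by the $2\,b_{h-1}$ term, with no double counting. Everything else is routine solution of linear recurrences. As a sanity check and alternative derivation, the whole $A_5$ argument can be recast probabilistically: as $h\to\infty$ five uniformly chosen leaves behave like five independent root-to-leaf left/right walks, and the same equations reappear as fixed-point relations for the limiting shape probabilities, giving $P(CD^2_2)=\tfrac{3}{7}$ and then $P(A_5)=\tfrac{1}{7}$.
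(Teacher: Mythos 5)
Your proposal is correct. For the $Q_4$ limit you do exactly what the paper does: specialise Proposition~\ref{prop: copy of P4} to $d=3$, $k=4$ and extract the leading term $3^{4h}/312$ against $\binom{3^h}{4}\sim 3^{4h}/24$, giving $24/312=1/13$. For the $A_5$ limit your route differs from the paper's: the paper simply cites \citet[Theorem 1]{DossouOloryWagnerPaper2}, which gives $\lim_{h\to\infty}\gamma(A_5,CD^2_h)=\frac{2\cdot 5}{2^5-2}\cdot I_2(CD^2_2)$, and then plugs in the known value $I_2(CD^2_2)=3/7$; you instead derive the recursions $a_h=2a_{h-1}+\binom{2^{h-1}}{2}^2$ and $b_h=2b_{h-1}+2\cdot 2^{h-1}a_{h-1}$ from scratch (these are just \eqref{eqforCD2} and \eqref{A5inT} specialised to $CD^2_h$, and your bookkeeping for the $1$--$4$ root split is right, since a $2$--$3$ split cannot produce $A_5$) and solve them to get $a_h\sim 16^h/56$, $b_h\sim 32^h/840$, hence the limit $120/840=1/7$. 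Your derivation is self-contained and makes the constant $1/7$ transparent without appealing to the external theorem, at the cost of a little more computation; the paper's version is shorter but leans on a black-box result from a companion paper. The numerics check out in both halves, so there is no gap.
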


\begin{proof}
	The specialisation $d=3$ and $k=4$ in Proposition~\ref{prop: copy of P4} yields
	\begin{align*}
	\lim_{h \to \infty}\gamma\big(Q_4,CD^3_h\big)=\frac{1}{13}\,.
	\end{align*}
	
	As a special case of a result in~\citep[Theorem 1]{DossouOloryWagnerPaper2}, we know that
	\begin{align*}
	\lim_{h \to \infty}\gamma\big(A_5,CD^2_h\big)=\frac{2\cdot 5}{2^5-2}\cdot I_2\big(CD^2_2\big)\,,
	\end{align*}
	while it was proved in the same source (see also~\citet[Proposition~2]{czabarka2016inducibility}) that 
	\begin{align*}
	I_2\big(CD^2_2\big)=\frac{3}{7}\,.
	\end{align*}
This completes the proof of the proposition.
\end{proof}

\section{An algorithm for the maximum}\label{sec:ALGO}

Our next theorem will be used to prove the upper bound on the inducibility of each of the trees $A_5$ and $Q_4$. Here, we shall only discuss the tree $A_5$ (the case of $Q_4$ is analogous, as will become clear from the proof). We know from \citet[Theorem~3]{AudaceStephanPaper1} that
\begin{align*}
I_d(S) \leq \max_{\substack{|T| = n \\ T \text{ $d$-ary tree}}} \gamma(S,T)
\end{align*}
for all $d$-ary trees $S$ and $n \geq |S|$. Thus it suffices to determine the value on the right (which can be shown to be decreasing in $n$ as $n \geq |S|$) for as large a value of $n$ as possible to obtain an upper bound. This will be the main goal of this section, where an algorithm for this purpose will be presented. We first need a series of lemmas.

\medskip
If $v$ is a vertex of a topological tree $T$, then the subtree $T[v]$ consisting of $v$ and all its descendants in $T$ is called a \emph{fringe} subtree of $T$. In other words, $T[v]$ is the subtree of $T$ rooted at $v$.

\begin{lemma}\label{lem:two_terms}
	Let $v$ be a vertex of a binary tree $T$, and let $T[v]$ be the fringe subtree rooted at $v$. The number of copies of $A_5$ in $T$ can be expressed as
	\begin{align*}
	c(A_5,T) = c(A_5,T[v]) + (|T| - |T[v]|) c(CD_2^2,T[v]) + R\,,
	\end{align*}
	where $R$ only depends on the size of $T[v]$ (and the rest of $T$), but not its precise structure.
\end{lemma}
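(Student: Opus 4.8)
The plan is to classify every copy of $A_5$ in $T$ according to how its five leaves are distributed relative to the fringe subtree $T[v]$. Recall that $A_5$ has a specific shape: a root whose two branches are a single leaf and the complete binary tree $CD_2^2$ of height $2$ (with four leaves). So a copy of $A_5$ is determined by a choice of five leaves of $T$ whose induced subtree is isomorphic to $A_5$. I would split the count $c(A_5,T)$ into three groups depending on how many of the five leaves lie inside $T[v]$: (i) all five leaves lie in $T[v]$; (ii) four leaves lie in $T[v]$ and one lies outside; (iii) three or fewer leaves lie in $T[v]$.

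First, group (i) contributes exactly $c(A_5,T[v])$, since a set of five leaves entirely within $T[v]$ induces the same subtree whether we view it inside $T[v]$ or inside $T$. Next, for group (ii), the key observation is structural: if four of the five leaves lie in $T[v]$ and the fifth lies outside, then in the induced subtree the outside leaf must attach above $v$, so it plays the role of the lone leaf branch of $A_5$, and the four leaves inside $T[v]$ must induce precisely the $CD_2^2$ branch. Thus such a copy corresponds to choosing a copy of $CD_2^2$ among the leaves of $T[v]$ together with any one of the $|T|-|T[v]|$ leaves outside $T[v]$. This is where the factor $(|T|-|T[v]|)\,c(CD_2^2,T[v])$ comes from. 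I would need to verify carefully that the outside leaf always joins the induced structure exactly at the point making the whole thing an $A_5$ (i.e. that the four inside leaves inducing a $CD_2^2$, together with one external leaf, always induce $A_5$ and never some other $5$-leaf tree), which hinges on the fact that $v$ separates the inside leaves from the outside leaf in $T$.

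Finally, for group (iii), where at most three of the five chosen leaves lie in $T[v]$, I would argue that the contribution depends only on the size $|T[v]|$ and the structure of $T$ outside $T[v]$, not on the internal structure of $T[v]$. The reason is that when three or fewer leaves are selected from $T[v]$, their induced configuration within the copy of $A_5$ is limited: any subset of at most three leaves of $T[v]$ induces one of only a few small shapes, but more importantly, the way these leaves connect to the rest of the copy of $A_5$ is governed entirely by where they sit relative to $v$ in $T$ — all leaves of $T[v]$ share the same attachment point $v$ to the outside. Consequently, the number of copies of $A_5$ in this group can be written purely in terms of the number of ways to pick the appropriate number of leaves from the $|T[v]|$ available (a quantity depending only on $|T[v]|$) times a count determined by the outside part of $T$. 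Collecting this entire group into the remainder term $R$ then gives a quantity that depends only on $|T[v]|$ and on $T$ outside $T[v]$, as claimed.

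I expect the main obstacle to be the careful case analysis in group (iii): one must check that no copy of $A_5$ with, say, exactly two or exactly three leaves in $T[v]$ can distinguish between two fringe subtrees of the same size. The subtle point is that with three leaves inside $T[v]$, those three leaves do induce a shape within $T[v]$ (either a cherry-plus-leaf or a star on three leaves), and a priori this shape might matter. The resolution is that in any copy of $A_5$, the three inside leaves together with the two outside leaves must form $A_5$, and since all three inside leaves hang below $v$, their induced subtree is forced to be a single branch of the copy of $A_5$ attached at $v$; but $A_5$ has no branch with exactly three leaves that would constrain their internal shape beyond what is already counted — in fact one checks that three leaves in a common fringe never induce a valid sub-branch of $A_5$ of the required type, so this sub-case contributes zero, and only the cases of at most two inside leaves survive, each manifestly independent of the structure of $T[v]$. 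Making this dichotomy precise and exhaustive is the delicate part; once it is settled, absorbing everything into $R$ is immediate.
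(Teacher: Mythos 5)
Your proposal is correct and follows essentially the same route as the paper: split the copies of $A_5$ by the number of chosen leaves lying in $T[v]$, count the all-five and exactly-four cases explicitly (the latter forcing the four inside leaves to induce $CD_2^2$), and absorb the at-most-three case into $R$ using the fact that in a binary tree any set of at most three leaves induces a unique shape, so only $|T[v]|$ matters. Your additional observation that the exactly-three case in fact contributes zero (since $A_5$ has no $3$-leaf fringe subtree) is correct but not needed for the lemma.
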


\begin{proof}
	If a set of leaves contains at most three leaves of $T[v]$, then there is only one possibility for the tree induced by them inside of $T[v]$. Thus the number of copies of $A_5$ in $T$ that contain at most three leaves of $T[v]$ only depends on the size of $T[v]$, but not its shape. This leaves us with 
	\begin{itemize}
		\item copies of $A_5$ that are entirely contained in $T[v]$; their number is clearly $c(A_5,T[v])$,
		\item copies of $A_5$ that contain precisely four leaves of $T[v]$; there are $|T| - |T[v]|$ other leaves, and the four leaves in $T[v]$ have to induce a copy of $CD_2^2$ to obtain a copy of $A_5$. Thus the number of these copies is \begin{math}(|T| - |T[v]|) c(CD_2^2,T[v])\end{math}.
	\end{itemize}
	The statement of the lemma follows.
\end{proof}

\begin{lemma}\label{lem:aux1}
	Let $v$ be a vertex of a binary tree $T$, and let $T[v]$ be the fringe subtree rooted at $v$. Let $S$ be a binary tree of the same size as $T[v]$ that satisfies
	\begin{align*}
c(CD_2^2,S) \geq c(CD_2^2,T[v]) \quad \text{and} \quad c(A_5,S) \geq c(A_5,T[v])\,,
	\end{align*}
	at least one of them with strict inequality. Let $T'$ be obtained from $T$ by replacing $T[v]$ with $S$; then we have
\begin{equation*}
c(A_5,T') > c(A_5,T).
\end{equation*}
\end{lemma}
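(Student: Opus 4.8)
The plan is to apply Lemma~\ref{lem:two_terms} twice — once to $T$ and once to $T'$ — at the same vertex $v$, and then subtract the two resulting expressions. Since the structural ``error term'' $R$ appearing in that lemma is insensitive to the shape of the fringe subtree, the two copies of $R$ will cancel, leaving a difference that is manifestly a nonnegative combination of the two hypothesised gaps.

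First I would write down, using Lemma~\ref{lem:two_terms} applied to $T$,
\begin{align*}
c(A_5,T) = c(A_5,T[v]) + (|T|-|T[v]|)\,c(CD_2^2,T[v]) + R\,.
\end{align*}
Next, I would apply the same lemma to $T'$ at the vertex $v$, whose fringe subtree is now $S$. Here the key observation is that $|S| = |T[v]|$, so that $|T'| = |T|$ and $|T'|-|S| = |T|-|T[v]|$; moreover the part of the tree lying outside the fringe subtree at $v$ is literally the same in $T$ and in $T'$. Since Lemma~\ref{lem:two_terms} asserts that $R$ depends only on the size of the fringe subtree and on the rest of the tree — both of which are unchanged by the replacement — the error term $R'$ for $T'$ equals the error term $R$ for $T$. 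Thus
\begin{align*}
c(A_5,T') = c(A_5,S) + (|T|-|T[v]|)\,c(CD_2^2,S) + R\,.
\end{align*}

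Subtracting the two identities gives
\begin{align*}
c(A_5,T') - c(A_5,T) = \big(c(A_5,S)-c(A_5,T[v])\big) + (|T|-|T[v]|)\big(c(CD_2^2,S)-c(CD_2^2,T[v])\big)\,.
\end{align*}
Both parenthesised differences are nonnegative by hypothesis, and the coefficient $|T|-|T[v]|$ is nonnegative, so the right-hand side is at least $0$; to finish I would argue that it is strictly positive. If the $A_5$-gap is the strict one, this is immediate. If instead only the $CD_2^2$-gap is strict, I need the multiplier $|T|-|T[v]|$ to be positive, i.e.\ that $v$ is a proper descendant (so that at least one leaf of $T$ lies outside $T[v]$) — which is exactly the nondegenerate situation in which the replacement is meaningful.

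The main obstacle — really the only substantive point — is the justification that the two error terms coincide exactly, $R'=R$. This rests entirely on the qualitative claim in Lemma~\ref{lem:two_terms} that $R$ is a function of the size of the fringe subtree and of the remainder of the tree alone; once that is in hand, the computation collapses to a one-line cancellation. The secondary care needed is the handling of strictness, since the $CD_2^2$-gap enters weighted by $|T|-|T[v]|$ and therefore contributes nothing when $T[v]=T$.
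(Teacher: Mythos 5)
Your proposal is correct and is exactly the argument the paper intends: the paper's proof is the one-line remark that the claim is ``immediate from the previous lemma,'' and your write-up simply makes explicit the double application of Lemma~\ref{lem:two_terms} with the cancellation of the term $R$. Your observation about strictness when only the $c(CD_2^2,\cdot)$ inequality is strict (requiring $|T|-|T[v]|>0$, i.e.\ $v$ not the root) is a valid edge case that the paper leaves implicit; it does not affect the use of the lemma in the algorithm, where replacements are performed at proper fringe subtrees.
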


\begin{proof}
	This is immediate from the previous lemma.
\end{proof}

\begin{lemma}\label{lem:aux2}
	Let $v$ be a vertex of a binary tree $T$, and let $T[v]$ be the fringe subtree rooted at $v$. Let $S_1$ and $S_2$ be two binary trees of the same size as $T[v]$ that satisfy
	\begin{align*}
	c(CD_2^2,S_1) > c(CD_2^2,T[v]) > c(CD_2^2,S_2)
	\end{align*}
	and
	\begin{align*}
 c( A_5,S_1) < c(A_5,T[v]) < c(A_5,S_2)\,.
	\end{align*}
	Suppose further that
	\begin{equation}\label{eq:slopes}
	\frac{c(A_5,S_1) - c(A_5,T[v])}{c(CD_2^2,S_1) - c(CD_2^2,T[v])} \geq \frac{c(A_5,T[v]) - c(A_5,S_2)}{c(CD_2^2,T[v]) - c(CD_2^2,S_2)}.
	\end{equation}
	Let $T_1$ and $T_2$ be obtained from $T$ by replacing $T[v]$ with $S_1$ and $S_2$ respectively; then we have
	\begin{equation}\label{eq:one_of}
	\max \big(c(A_5,T_1),c(A_5,T_2) \big) \geq c(A_5,T).
	\end{equation}
	If strict inequality holds in~\eqref{eq:slopes}, then we also have strict inequality in~\eqref{eq:one_of}.
\end{lemma}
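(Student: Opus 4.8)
The plan is to reduce everything to Lemma~\ref{lem:two_terms} and exploit the linear structure it reveals. By that lemma, writing $m = |T| - |T[v]|$ (a constant once the size of $T[v]$ is fixed), we have for \emph{any} replacement tree $U$ of the same size as $T[v]$ that
\begin{align*}
c(A_5,T_U) = c(A_5,U) + m\, c(CD_2^2,U) + R\,,
\end{align*}
where $T_U$ denotes the tree obtained by substituting $U$ for $T[v]$, and $R$ is the same constant in every case (it depends only on the sizes involved). Applying this with $U = T[v]$, $U = S_1$, and $U = S_2$ respectively, the common additive constant $R$ cancels in all comparisons, so it suffices to analyse the affine functional $\phi(U) := c(A_5,U) + m\, c(CD_2^2,U)$.

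The key observation is that, in the plane with coordinates $x = c(CD_2^2,\cdot)$ and $y = c(A_5,\cdot)$, the three trees correspond to three points $P_0 = (x_0,y_0)$ (for $T[v]$), $P_1 = (x_1,y_1)$ (for $S_1$), and $P_2 = (x_2,y_2)$ (for $S_2$), and $\phi$ is the linear functional $y + m x$ evaluated at each point. The hypotheses say that $x_1 > x_0 > x_2$ while $y_1 < y_0 < y_2$, so $P_1$ and $P_2$ lie on opposite sides of $P_0$ in both coordinates; the slope condition~\eqref{eq:slopes} asserts precisely that the slope of the segment $P_0P_1$ is at least the slope of the segment $P_2P_0$ (both slopes are negative). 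Geometrically this means $P_0$ lies on or below the chord joining $P_1$ and $P_2$, i.e.\ the point $P_0$ is not above the line through $P_1$ and $P_2$. Hence I would argue that the \emph{maximum} of the linear functional $\phi$ over the two endpoints $P_1, P_2$ is at least its value at $P_0$.

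Concretely, I would prove this by a direct algebraic manipulation rather than invoking convexity abstractly. Let $a = x_1 - x_0 > 0$ and $b = x_0 - x_2 > 0$, and correspondingly $\phi(S_1) - \phi(T[v]) = (y_1 - y_0) + m a$ and $\phi(S_2) - \phi(T[v]) = (y_2 - y_0) - m b$. The slope inequality~\eqref{eq:slopes} rearranges (after clearing the positive denominators $a$ and $b$) to
\begin{align*}
b\,(y_1 - y_0) \geq a\,(y_0 - y_2)\,,
\end{align*}
equivalently $b(y_1-y_0) + a(y_2-y_0) \geq 0$. Now I claim that $\phi(S_1) - \phi(T[v])$ and $\phi(S_2) - \phi(T[v])$ cannot both be negative. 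Indeed, if both were negative, then $(y_1-y_0) + ma < 0$ and $(y_2-y_0) - mb < 0$; multiplying the first by $b>0$ and the second by $a>0$ and adding yields $b(y_1-y_0) + a(y_2-y_0) + mab - mab = b(y_1-y_0) + a(y_2-y_0) < 0$, contradicting the rearranged slope inequality. Therefore at least one of $\phi(S_1), \phi(S_2)$ is $\geq \phi(T[v])$, which (after adding back the common constants and the term $R$) is exactly~\eqref{eq:one_of}.

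For the strict version, I would simply track the inequalities: if~\eqref{eq:slopes} is strict, then $b(y_1-y_0) + a(y_2-y_0) > 0$, and the same summation argument shows that assuming both $\phi$-differences are $\le 0$ gives a strict contradiction, so in fact at least one difference is strictly positive, yielding strict inequality in~\eqref{eq:one_of}. I do not anticipate a genuine obstacle here; the only point requiring care is bookkeeping with the signs, since both relevant slopes are negative and one must be sure the denominators $a,b$ are positive (guaranteed by the strict hypotheses on the $CD_2^2$-counts) before cross-multiplying, so that no inequality is accidentally reversed.
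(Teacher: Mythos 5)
Your proof is correct and follows essentially the same route as the paper's: both reduce the claim to the affine expression supplied by Lemma~\ref{lem:two_terms} and then compare the two slopes against $-k$ (you via cross-multiplying and deriving a contradiction, the paper via a direct case split on whether the first slope is at least $-k$). The sign bookkeeping in your cross-multiplication is handled correctly, including the strict case.
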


\begin{proof}
	Let $k = |T| - |T[v]|$. By Lemma~\ref{lem:two_terms}, we have
	\begin{align*}
	c(A_5,T_1) - c(A_5,T) &= c(A_5,S_1) - c(A_5,T[v]) + k \big( c(CD_2^2,S_1) - c(CD_2^2,T[v]) \big) \\
	&= \big( c(CD_2^2,S_1) - c(CD_2^2,T[v]) \big) \Big( k + \frac{c(A_5,S_1) - c(A_5,T[v])}{c(CD_2^2,S_1) - c(CD_2^2,T[v])} \Big).
	\end{align*}
	If
\begin{equation*}
\frac{c(A_5,S_1) - c(A_5,T[v])}{c(CD_2^2,S_1) - c(CD_2^2,T[v])} \geq -k,
\end{equation*}
	then we are done, since $c(A_5,T_1) \geq c(A_5,T)$. Otherwise,~\eqref{eq:slopes} implies that
\begin{equation*}
\frac{c(A_5,T[v]) - c(A_5,S_2)}{c(CD_2^2,T[v]) - c(CD_2^2,S_2)} < -k.
\end{equation*}
	Now it follows that
	\begin{align*}
	c(A_5,T_2) - c(A_5,T) &= c(A_5,S_2) - c(A_5,T[v]) + k \big( c(CD_2^2,S_2) - c(CD_2^2,T[v]) \big) \\
	&= \big( c(CD_2^2,S_2) - c(CD_2^2,T[v]) \big) \Big( k + \frac{c(A_5,T[v]) - c(A_5,S_2)}{c(CD_2^2,T[v]) - c(CD_2^2,S_2)} \Big) > 0,
	\end{align*}
	so $c(A_5,T_2) \geq c(A_5,T)$. Either way, we have~\eqref{eq:one_of}. Equality can only hold if both quotients in~\eqref{eq:slopes} are equal to $-k$. This completes the proof.
\end{proof}

\begin{lemma}\label{lem:aux3}
	Let $v$ be a vertex of a binary tree $T$, and let $T[v]$ be the fringe subtree rooted at $v$. Let $S$ be a binary tree of the same size as $T[v]$ that satisfies
	\begin{align*}
	c(CD_2^2,S) > c(CD_2^2,T[v])
	\end{align*}
	and
	\begin{align*}
	c(A_5,S) < c(A_5,T[v])\,.
	\end{align*}
	Suppose further that
	\begin{equation}\label{eq:slope2}
	\frac{c(A_5,S) - c(A_5,T[v])}{c(CD_2^2,S) - c(CD_2^2,T[v])} \geq |T[v]| - |T|.
	\end{equation}
	Let $T'$ be obtained from $T$ by replacing $T[v]$ with $S$; then we have
	\begin{equation}\label{eq:third}
	c(A_5,T') \geq c(A_5,T).
	\end{equation}
	If strict inequality holds in~\eqref{eq:slope2}, then we also have strict inequality in~\eqref{eq:third}.
\end{lemma}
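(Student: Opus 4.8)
The plan is to derive this directly from the two-term expansion in Lemma~\ref{lem:two_terms}, following the same bookkeeping as in the proof of Lemma~\ref{lem:aux2}. Since $S$ has the same number of leaves as $T[v]$, replacing $T[v]$ by $S$ leaves both $|T|$ and the remainder term $R$ appearing in Lemma~\ref{lem:two_terms} unchanged. Setting $k = |T| - |T[v]|$ and subtracting the two expansions, I would obtain
\begin{align*}
c(A_5,T') - c(A_5,T) &= \big( c(A_5,S) - c(A_5,T[v]) \big) + k \big( c(CD_2^2,S) - c(CD_2^2,T[v]) \big).
\end{align*}

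Next I would factor out the quantity $c(CD_2^2,S) - c(CD_2^2,T[v])$, which is strictly positive by hypothesis, so that
\begin{align*}
c(A_5,T') - c(A_5,T) &= \big( c(CD_2^2,S) - c(CD_2^2,T[v]) \big) \Big( k + \frac{c(A_5,S) - c(A_5,T[v])}{c(CD_2^2,S) - c(CD_2^2,T[v])} \Big).
\end{align*}
The slope hypothesis~\eqref{eq:slope2} states precisely that the fraction here is at least $|T[v]| - |T| = -k$, whence the second factor is nonnegative. As the first factor is strictly positive, the product is nonnegative, which yields~\eqref{eq:third}. For the strictness claim, I would simply note that if~\eqref{eq:slope2} holds strictly, then the second factor is strictly positive; multiplied by the strictly positive first factor, the difference $c(A_5,T') - c(A_5,T)$ is then strictly positive as well.

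There is essentially no real obstacle here: once the expansion of Lemma~\ref{lem:two_terms} is available, the statement reduces to a one-line sign computation, and in fact it mirrors exactly the favourable branch of the argument already carried out in Lemma~\ref{lem:aux2}. The only point requiring a moment of care is the direction of the inequality when passing to the quotient: because $c(CD_2^2,S) - c(CD_2^2,T[v])$ is positive by assumption, no inequality is reversed, so the bound in~\eqref{eq:slope2} transfers directly to the sign of the parenthesised factor.
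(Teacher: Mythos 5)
Your proposal is correct and follows exactly the paper's own argument: both expand the difference via Lemma~\ref{lem:two_terms}, factor out the strictly positive quantity $c(CD_2^2,S) - c(CD_2^2,T[v])$, and read off the sign of the remaining factor from the slope hypothesis~\eqref{eq:slope2}. The handling of the strict-inequality case is likewise identical.
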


\begin{proof}
	As in the proof of the previous lemma, we have
	\begin{align*}
c(A_5,T') - c(A_5,T) = \big( c(CD_2^2,S) - c(CD_2^2,T[v]) \big) \Big( |T| - |T[v]| + \frac{c(A_5,S) - c(A_5,T[v])}{c(CD_2^2,S) - c(CD_2^2,T[v])} \Big)\,.
	\end{align*}
	The statement follows immediately.
\end{proof}

\medskip
Now we are ready to describe the algorithm to determine the maximum number of copies of $A_5$ in a binary tree with $n$ leaves. To this end, we define a sequence of sets of binary trees: intuitively speaking, $\mathcal{L}(n)$ consists of trees with $n$ leaves that can potentially occur as fringe subtrees of ``optimal'' trees, \textit{i.e.},
binary trees that maximize the number of copies of $A_5$. A formal recursive definition will be provided below. We also associate every tree $T$ with the pair \begin{math}P(T) = (c(A_5,T),c(CD_2^2,T))\end{math}, which can be interpreted as a point in the plane, and we set
\begin{align*}
L(n) = \{P(T) \,:\, T \in \cL(n)\}
\end{align*}
for every $n$. The sets $\cL(n)$ are recursively defined as follows:

\begin{enumerate}
	\item The set $\cL(1)$ only consists of one tree, which only has a single vertex.
	\item For $n > 1$, we consider all binary trees with $n$ leaves for which each branch lies in one of the sets $\cL(m)$ for some $m < n$. Clearly, if one branch lies in $\cL(k)$, the other has to lie in $\cL(n-k)$. For reasons to become clear later (essentially, we are applying Lemma~\ref{lem:aux3}), we will be even more restrictive: we consider all binary trees with $n$ leaves whose branches both lie in
	\begin{multline*}
	\bigcup_{m < n} \Big\{ T \in \cL(m) \,:\, \text{there is no } S \in \cL(m) \text{ such that } c(CD_2^2,S) > c(CD_2^2,T), \\
	c(A_5,S) < c(A_5,T), \text{ and } \frac{c(A_5,S) - c(A_5,T[v])}{c(CD_2^2,S) - c(CD_2^2,T)} \geq m-n \Big\}.
	\end{multline*}
	This gives us a preliminary set $\cH_1(n)$.
	\item If there are two trees $T$ and $T'$ in $\cH_1(n)$ such that
	\begin{align*}
c(CD_2^2,T) \geq c(CD_2^2,T') \qquad \text{and} \qquad c(A_5,T) \geq c(A_5,T')\,,
	\end{align*}
	remove $T'$ from $\cH_1(n)$. If we have equality in both inequalities, we can arbitrarily remove either $T$ or $T'$. In geometric terms, the condition means that the point $P(T')$ lies to the left and below the point $P(T)$ in the plane. We repeat this step until there are no two trees $T$ and $T'$ satisfying the aforementioned condition anymore. At the end, we are left with a set $\cH_2(n)$. 
	\item As a final reduction step, we eliminate all trees $T$ from $\cH_2(n)$ for which there exist two trees $S_1$ and $S_2$ in $\cH_2(n)$ such that
	the inequalities of Lemma~\ref{lem:aux2} hold, \textit{i.e.},
	\begin{align*}
c(CD_2^2,S_1) > c(CD_2^2,T) > c(CD_2^2,S_2)
	\end{align*}
	and
	\begin{align*}
	c(A_5,S_1) < c(A_5,T) < c(A_5,S_2)
	\end{align*}
	as well as
	\begin{align*}
\frac{c(A_5,S_1) - c(A_5,T)}{c(CD_2^2,S_1) - c(CD_2^2,T)} \geq \frac{c(A_5,T) - c(A_5,S_2)}{c(CD_2^2,T) - c(CD_2^2,S_2)}\,.
	\end{align*}
	
	Considering the set of points \begin{math}\{P(T) \,: \, T \in \cH_2(T)\}\end{math} in the plane, this amounts to taking the upper envelope of the points. The resulting set after this reduction is
	$\cL(n)$. At this point, we can arrange the elements of $\cL(n)$ as a list of trees $T_1,T_2,\ldots,T_r$ such that
	\begin{align*}
	 c(CD_2^2,T_1) &< c(CD_2^2,T_2) < \cdots < c(CD_2^2,T_r)\,,\\
	 c(A_5,T_1) &> c(A_5,T_2) > \cdots > c(A_5,T_r)\,,
	\end{align*}
	and the sequence of ``slopes''
	\begin{align*}
	\frac{c(A_5,T_{j+1}) - c(A_5,T_j)}{c(CD_2^2,T_{j+1}) - c(CD_2^2,T_j)}
	\end{align*}
	is strictly decreasing. This also makes it easier to construct the set in step (2): the trees from $\cL(m)$ that are allowed as branches are precisely those starting from the point where the slope is less than $m-n$.
\end{enumerate}

Due to the rules of the two elimination steps, the following holds for all $T \in \cH_1(n)$ at the end:
\begin{itemize}
	\item Either there exists an $S \in \cL(n)$ (possibly $T = S$) such that
	\begin{align*}
	c(CD_2^2,S) \geq c(CD_2^2,T) \qquad \text{and} \qquad c(A_5,S) \geq c(A_5,T)\,,
	\end{align*}
	\item or there exist two trees $S_1,S_2 \in \cL(n)$ such that
	\begin{align*}
	c(CD_2^2,S_1) > c(CD_2^2,T) > c(CD_2^2,S_2),\ c(A_5,S_1) < c(A_5,T) < c(A_5,S_2)
	\end{align*}
	and
	\begin{align*}
	\frac{c(A_5,S_1) - c(A_5,T)}{c(CD_2^2,S_1) - c(CD_2^2,T)} \geq \frac{c(A_5,T) - c(A_5,S_2)}{c(CD_2^2,T) - c(CD_2^2,S_2)}\,.
	\end{align*}

\end{itemize}

\medskip
The following theorem shows that the maximum of $c(A_5,T)$ for binary trees $T$ with a given number of leaves can be determined purely by focusing on the sets $\cL(n)$.

\begin{theorem}\label{Thm:forAlgo}
	For every positive integer $n$, there exists a binary tree $M_n$ with $n$ leaves such that
	\begin{align*}
	c(A_5,M_n) = \max_{\substack{|T| = n\\ T~\text{binary tree}}} c(A_5,T)
	\end{align*}
	and all fringe subtrees of $M_n$ (including $M_n$ itself) lie in $\bigcup_{k \geq 1} \cL(k)$. In particular,
	\begin{align*}
	\max_{\substack{|T| = n\\ T~\text{binary tree}}} c(A_5,T) = \max_{T \in \cL(n)} c(A_5,T)\,.
	\end{align*}
\end{theorem}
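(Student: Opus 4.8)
The plan is to establish the stronger structural claim, namely the existence of the tree $M_n$ with all fringe subtrees in $\bigcup_{k\ge 1}\cL(k)$; the displayed identity is then immediate, since $\cL(n)$ consists of $n$-leaf binary trees and contains a global maximizer, so that $\max_{T\in\cL(n)}c(A_5,T) \le \max_{|T|=n}c(A_5,T) = c(A_5,M_n) \le \max_{T\in\cL(n)}c(A_5,T)$. The engine of the argument is Lemma~\ref{lem:two_terms}: with the size of a fringe subtree $T[v]$ held fixed and $k=|T|-|T[v]|\ge 0$, the quantity $c(A_5,T)=c(A_5,T[v])+k\,c(CD_2^2,T[v])+R$ is an affine function of the point $P(T[v])$ with the nonnegative ``direction'' $(1,k)$, and $R$ does not depend on the shape of $T[v]$. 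Thus, to maximize $c(A_5,T)$ by reshaping $T[v]$, one maximizes a linear functional with a nonnegative weight over the attainable points $P(\cdot)$, whose optimum sits on their upper-right envelope; the sets $\cL(m)$ are precisely the combinatorial encoding of these envelopes, and the elimination rules (3) and (4) are the two moves, Lemmas~\ref{lem:aux1} and~\ref{lem:aux2}, that discard trees which can never beat such an envelope.

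I would then argue by a bottom-up rewriting of an arbitrary maximizer. Fix $M_n$ with $c(A_5,M_n)=\max_{|T|=n}c(A_5,T)$ and process its vertices in order of nondecreasing fringe-subtree size, maintaining the invariant that, once a vertex $v$ has been handled, $M_n[v]\in\cL(|M_n[v]|)$, all fringe subtrees of $M_n[v]$ lie in $\bigcup_k\cL(k)$, and $c(A_5,M_n)$ has not dropped. When $v$ of fringe size $m=m_1+m_2$ is reached, its two children $u_1,u_2$ have already been handled, so the branches $M_n[u_1],M_n[u_2]$ lie in $\cL(m_1)$ and $\cL(m_2)$. First I push each branch into the restricted subset of step~(2): if the branch of size $m_i$ fails this, there is $S\in\cL(m_i)$ realizing the exclusion through Lemma~\ref{lem:aux3} with slope threshold $m_i-m$, and I replace the branch by $S$. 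This replacement is valid for the true ambient $M_n$ of size $n$ because $m\le n$ forces $m_i-m\ge m_i-n = |M_n[u_i]|-|M_n|$, so the hypothesis of Lemma~\ref{lem:aux3} holds with the genuine ambient size and $c(A_5,M_n)$ does not decrease; as each step strictly raises $c(CD_2^2)$ of a branch within the finite set $\cL(m_i)$, the pushing terminates and yields $M_n[v]\in\cH_1(m)$.

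With $M_n[v]\in\cH_1(m)$ I apply the dichotomy stated just before the theorem. Either some $S\in\cL(m)$ dominates $P(M_n[v])$ coordinatewise, and Lemma~\ref{lem:aux1} replaces $M_n[v]$ by $S$ without loss; or two trees $S_1,S_2\in\cL(m)$ straddle $P(M_n[v])$ and satisfy \eqref{eq:slopes}, and Lemma~\ref{lem:aux2}, applied with the genuine ambient weight $k=n-m\ge 0$, shows that replacing $M_n[v]$ by the better of $S_1,S_2$ does not decrease $c(A_5,M_n)$. In either case the new $M_n[v]$ belongs to $\cL(m)$, and since every member of $\cL(m)\subseteq\cH_1(m)$ is built from branches in $\cL$, all its fringe subtrees lie in $\bigcup_k\cL(k)$, so the invariant is restored. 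Handling the root produces the desired $M_n\in\cL(n)$ of maximum $c(A_5,\cdot)$ with all fringe subtrees in $\bigcup_k\cL(k)$.

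The delicate point is the interplay of the two slope thresholds --- the value $m_i-m$ baked into the recursion defining $\cL(m)$ against the value $m_i-n$ forced by the actual tree $M_n$ --- which must be checked to run in the favorable direction, as above, at every level. Equally, one must verify that \eqref{eq:slopes} is exactly the inequality making the two rays $\{k\ge -s_1\}$ and $\{k\le -s_2\}$ cover $[0,\infty)$, where $s_1,s_2$ denote the two slopes, so that the straddling alternative really certifies $\max(c(A_5,T_1),c(A_5,T_2))\ge c(A_5,T)$ for every admissible ambient weight; this, together with the termination of the rewriting, is where the bulk of the verification lies.
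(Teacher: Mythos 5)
Your proposal is correct and follows essentially the same route as the paper: the same three replacement lemmas (Lemmas~\ref{lem:aux1}, \ref{lem:aux2}, \ref{lem:aux3}) are applied to fringe subtrees, with the same observation that the slope threshold $m_i-m$ built into step (2) implies the threshold $m_i-n$ needed for the ambient tree. The only difference is presentational: you run a bottom-up rewriting of a fixed maximizer with an explicit termination argument, whereas the paper phrases the identical case analysis as a minimal-counterexample contradiction.
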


\begin{proof}
	Suppose that the statement does not hold, and let $m$ be minimal with the property that there is a positive integer $n$ such that every ``optimal'' tree (tree attaining the maximum \begin{math}\max_{|T| = n} c(A_5,T)\end{math}) has a fringe subtree with $m$ or fewer leaves that does not lie in \begin{math}\bigcup_{1 \leq k \leq m} \cL(k)\end{math}. Clearly, $m > 1$.
	
	By our choice of $m$, there must be an optimal tree $T$ with $n$ leaves for which all fringe subtrees with less than $m$ leaves lie in \begin{math}\bigcup_{1 \leq k < m} \cL(k)\end{math}. Among all possible choices of $T$, we can choose one for which the number of $m$-leaf fringe subtrees that
	do not lie in $\cL(m)$ is minimal. Consider one of these fringe subtrees $T[v]$. Both its branches lie in \begin{math}\bigcup_{1 \leq k < m} \cL(k)\end{math}, which leaves us with the following possible reasons why $T[v]$ is not in $\cL(m)$:
	\begin{itemize}
		\item The branches of $T[v]$ do not satisfy the condition of step (2) in the construction of $\cL(n)$ (\textit{i.e.}, $T[v]$ does not even lie in $\cH_1(m)$). Suppose that for one of the branches $B$, there is a tree $S$ in $\cL_{|B|}$ such that
		\begin{align*}
	  c(CD_2^2,S) > c(CD_2^2,B), c(A_5,S) < c(A_5,B)\,,
		\end{align*}
		and
		\begin{align*}
		\frac{c(A_5,S) - c(A_5,B)}{c(CD_2^2,S) - c(CD_2^2,B)} \geq |B|-|T[v]| \geq |B|-|T|\,.
		\end{align*}
	
		We can replace $B$ by $S$, and do likewise with the other branch of $T[v]$ if necessary. We either reach a contradiction to the optimality of $T[v]$ by means of Lemma~\ref{lem:aux3} immediately, or (if equality holds above) a new tree that is still optimal, but where $T[v]$ has been replaced by a tree in $\cL(m)$, again contradicting the choice of $T$. So for the remaining cases, we can at least assume that $T[v] \in \cH_1(m)$.
		
		\item There is a binary tree $S \in \cL(m)$ such that
		\begin{align*}
	c(CD_2^2,S) \geq c(CD_2^2,T[v]) \qquad \text{and} \qquad c(A_5,S) \geq c(A_5,T[v])\,.
		\end{align*}
	
		In this case, we can replace $T[v]$ by $S$ to obtain a new tree with at least as many copies of $A_5$ as $T$ by Lemma~\ref{lem:aux1}. This contradicts our choice of $T$ (it is either not optimal, or it does not have the smallest number of $m$-leaf
		fringe subtrees that do not lie in $\cL(m)$).
		\item There are binary trees $S_1,S_2 \in \cL(m)$ such that
		\begin{align*}
		c(CD_2^2,S_1) > c(CD_2^2,T) > c(CD_2^2,S_2),\ c(A_5,S_1) < c(A_5,T) < c(A_5,S_2)
		\end{align*}	
		and
		\begin{align*}
		\frac{c(A_5,S_1) - c(A_5,T)}{c(CD_2^2,S_1) - c(CD_2^2,T)} \geq \frac{c(A_5,T) - c(A_5,S_2)}{c(CD_2^2,T) - c(CD_2^2,S_2)}\,.
		\end{align*}
		In this case, we can replace $T[v]$ by either $S_1$ or $S_2$ to obtain a contradiction in the same way as in the previous case (now by means of Lemma~\ref{lem:aux2}).
	\end{itemize}
	Since we reach a contradiction in all possible cases, the proof is complete.
\end{proof}

\medskip
For a practical implementation of this algorithm, it actually suffices to work with the lists
\begin{align*}
L(n) = \{P(T) \,:\, T \in \cL(n)\}
\end{align*}
that contain the values of \begin{math}P(T) = (c(A_5,T),c(CD_2^2,T))\end{math}. These values can be calculated recursively: if the branches of a binary tree $T$ are $B_1$ and $B_2$, we have
\begin{align}\label{A5inT}
c(A_5,T) = c(A_5,B_1) + c(A_5,B_2) + |B_1|c(CD_2^2,B_2) + |B_2|c(CD_2^2,B_1)
\end{align}
and
\begin{align}\label{eqforCD2}
c(CD_2^2,T) = c(CD_2^2,B_1) + c(CD_2^2,B_2) + \binom{|B_1|}{2} \binom{|B_2|}{2}\,.
\end{align}
These formulas can be explained as follows:
\begin{itemize}
	\item A subset of five leaves of the leaf-set of $T$ can either be a subset of leaves of $B_1$, or a subset of leaves of $B_2$, or splits into leaves of both $B_1$ and $B_2$. In the latter case, the split must be of the type $1-4$ (or $4-1$) as the branches of $A_5$ are a single vertex and $CD^2_2$. Moreover, the four leaves that lie in one branch have to induce $CD_2^2$ there. This proves the recursion for $A_5$.
	\item Four leaves of $T$ that induce the tree $CD^2_2$ can either lie entirely in $T_1$ or $T_2$; or precisely two leaves in each of the branches $B_1$ and $B_2$ of $T$ induce the star $S_2$ to obtain a copy of $CD^2_2$. This proves the recursive formula for $CD^2_2$.
\end{itemize}
Thus, it is never necessary to store full tree structures. At the end, the maximum 
\begin{align*}
\max_{\substack{|T| = n\\ T~\text{binary tree}}} c(A_5,T)
\end{align*}
can be determined easily from $L(n)$.

\section{Proof of Theorem~\ref{lowerbforA5}}\label{SectProofA5}

This section is devoted to proving Theorem~\ref{lowerbforA5}. Recall that we are going to use the binary tree $S(n_1,n_2,n_3,n_4)$ presented in Figure~\ref{Gene Tree for A2,5}. Moreover, we now need to consider only $I_2(A_5)$ because it is established in~\citep[Corollary 8]{DossouOloryWagnerPaper3} that $J(B)=I_2(B)$ for every binary tree $B$.

\begin{proof}[of Theorem~\ref{lowerbforA5}]
	Let us set \begin{math}n=n_1+n_2+n_3+n_4\end{math}. Recall from equation~\eqref{A5inT} that a recursion for the number of copies of $A_5$ in any binary tree $T$ with branches $B_1$ and $B_2$ is given by
	\begin{align*}
	c(A_5,T)=c(A_5,B_1)+c(A_5,B_2)+|B_1|\cdot c\big(CD^2_2,B_2\big) + |B_2|\cdot c\big(CD^2_2,B_1\big)\,.
	\end{align*}
	So for the tree $S(n_1,n_2,n_3,n_4)$, we obtain
	\begin{align}\label{First1}
	\begin{split}
	c\big(A_5,S(n_1,n_2,n_3,n_4)\big)&=c\big(A_5,E^2_{n_1}\big)+c\big(A_5,E^2_{n_2}\big) +c\big(A_5,E^2_{n_3}\big) + c\big(A_5,E^2_{n_4}\big)\\
	& +n_3 \cdot c\big(CD^2_2,E^2_{n_4}\big) + n_4 \cdot c\big(CD^2_2,E^2_{n_3}\big)\\
	&+n_2 \cdot c\big(CD^2_2,T_{n_3,n_4}\big) + (n_3+n_4) \cdot c\big(CD^2_2,E^2_{n_2}\big)\\
	&+n_1 \cdot c\big(CD^2_2,T_{n_2,n_3,n_4}\big) + (n_2+n_3+n_4) \cdot c\big(CD^2_2,E^2_{n_1} \big)\,,
	\end{split}
	\end{align}
	where $T_{n_3,n_4}$ is the binary tree whose branches are the even binary trees $E^2_{n_3}$ and $E^2_{n_4}$, while $T_{n_2,n_3,n_4}$ is the binary tree whose branches are $E^2_{n_2}$ and $T_{n_3,n_4}$.
	
	Also, recall from equation~\eqref{eqforCD2} that a recursion for the number of copies of $CD_2^2$ in any binary tree $T$ with branches $B_1$ and $B_2$ is given by
	\begin{align*}
	c(CD_2^2,T) = c(CD_2^2,B_1) + c(CD_2^2,B_2) + \binom{|B_1|}{2} \binom{|B_2|}{2}\,.
	\end{align*}
	So for the binary tree $T_{n_2,n_3,n_4}$, we get
	\begin{align*}
	c\big(CD^2_2,T_{n_2,n_3,n_4}\big)=c\big(CD^2_2,E^2_{n_2}\big)+c\big(CD^2_2,T_{n_3,n_4} \big)+\binom{n_2}{2} \binom{n_3+n_4}{2}\,.
	\end{align*}
	Likewise, 
	\begin{align*}
	c\big(CD^2_2,T_{n_3,n_4}\big)=c\big(CD^2_2,E^2_{n_3}\big)+c\big(CD^2_2,E^2_{n_4} \big) +\binom{n_3}{2} \binom{n_4}{2}\,.
	\end{align*}
	
	Thus, equation~\eqref{First1} becomes
	\begin{align*}
	c\big(A_5,\,&S(n_1,n_2,n_3,n_4)\big)=c\big(A_5,E^2_{n_1}\big)+c\big(A_5,E^2_{n_2}\big) +c\big(A_5,E^2_{n_3}\big) + c\big(A_5,E^2_{n_4}\big)\\
	& + n_3 \cdot c\big(CD^2_2,E^2_{n_4}\big) + n_4 \cdot c\big(CD^2_2,E^2_{n_3}\big)+ (n_3+n_4) \cdot c\big(CD^2_2,E^2_{n_2}\big)\\
	&+n_2  \Big(c\big(CD^2_2,E^2_{n_3}\big)+c\big(CD^2_2,E^2_{n_4} \big) +\binom{n_3}{2} \binom{n_4}{2} \Big) \\
	&+n_1 \Big( c\big(CD^2_2,E^2_{n_2}\big)+ c\big(CD^2_2,E^2_{n_3}\big)+c\big(CD^2_2,E^2_{n_4} \big) +\binom{n_3}{2} \binom{n_4}{2}\\
	&+\binom{n_2}{2} \binom{n_3+n_4}{2}\Big) + (n_2+n_3+n_4) \cdot c\big(CD^2_2,E^2_{n_1} \big)
	\end{align*}
	after combining everything. As a special case of \citet[Theorem 12]{DossouOloryWagnerPaper2}, we have
	\begin{align*}
	c\big(CD^2_2,E^2_n\big)=\frac{1}{56}\cdot n^4 +\mathcal{O}(n^3)
	\end{align*}
	for all $n$. On the other hand, using this asymptotic formula along with the recursion
	\begin{align*}
	c\big(A_5,E^2_n\big)&=c\big(A_5,E^2_{\lfloor n/2 \rfloor}\big)+c(A_5,E^2_{\lceil n/2 \rceil})\\
	&+\lfloor n/2 \rfloor \cdot c\big(CD^2_2,E^2_{\lceil n/2 \rceil}\big) + \lceil n/2 \rceil \cdot c\big(CD^2_2,E^2_{\lfloor n/2 \rfloor}\big)\,,
	\end{align*}
	which follows from~\eqref{A5inT} by the definition of the even binary tree $E^2_n$, it is not hard to prove that there exist absolute constants $K_1,K_2\geq 0$ such that the double inequality
	\begin{align*}
	\frac{1}{840}\cdot n^5 - K_1\cdot n^4\leq c\big(A_5,E^2_n\big)\leq \frac{1}{840}\cdot n^5 +K_2\cdot n^4
	\end{align*}
	holds for all $n$---the details are omitted. 
	
	Now, let $x_1,x_2,x_3,x_4$ be positive real numbers with \begin{math}x_1+x_2+x_3+x_4=1\end{math}. We set 
	\begin{align*}
	n_i = \lfloor x_i n \rfloor = x_i n + \mathcal{O}(1)
	\end{align*}
for $i=1,2,3,4$. Combining all the asymptotic formulas, we can now rewrite \begin{math}c\big(A_5,S(n_1,n_2,n_3,n_4)\big)\end{math} as follows:
	\begin{align*}
	c\big(A_5,S(n_1,n_2,n_3,n_4)\big)&=\frac{n^5}{840} \big(x_1^5+x_2^5 +x_3^5 +x_4^5\big)+\frac{n^5}{56} \big(x_3\cdot x_4^4 +x_4\cdot x_3^4\big)\\
	&+\frac{n^5}{56}\cdot x_2\big(x_3^4 +x_4^4  +14 \cdot x_3^2 \cdot x_4^2 \big) + \frac{n^5}{56}(x_3+x_4)x_2^4 \\
	&+\frac{n^5}{56}\cdot x_1 \Big(x_2^4+ x_3^4 + x_4^4+ 14 \cdot x_3^2\cdot x_4^2  + 14\cdot x_2^2 (x_3+x_4)^2\Big)\\
	& +\frac{n^5}{56}(x_2+x_3+x_4)x_1^4 +\mathcal{O}(n^4)\,.
	\end{align*}
	
	Set
	\begin{align*}
	F(x_1,x_2,x_3)&=\frac{1}{840} \big(x_1^5+x_2^5 +x_3^5 +(1-x_1-x_2-x_3)^5\big)+\frac{1}{56} \bigg(x_3 (1-x_1-x_2-x_3)^4\\
	& +(1-x_1-x_2-x_3) x_3^4 + x_2\big(x_3^4 +(1-x_1-x_2-x_3)^4 \\
	& +14 \cdot x_3^2 (1-x_1-x_2-x_3)^2 \big) + (1-x_1-x_2)x_2^4 \\
	&+x_1 \Big(x_2^4+ x_3^4 + (1-x_1-x_2-x_3)^4+ 14 \cdot x_3^2 (1-x_1-x_2-x_3)^2 \\
	& + 14\cdot x_2^2 (1-x_1-x_2)^2\Big) +(1-x_1)x_1^4 \bigg)\,.
	\end{align*}
	Then we obtain
	\begin{align*}
	c\big(A_5,S(n_1,n_2,n_3,n_4)\big)=F(x_1,x_2,x_3) \cdot n^5 +\mathcal{O}(n^4)
	\end{align*}
	as \begin{math}x_1+x_2+x_3+x_4=1\end{math}. With the help of a computer, we find that the global maximum of the function $F(x_1,x_2,x_3)$ in the region covered by the inequalities \begin{math}0<x_1,x_2,x_3<1,~x_1+x_2+x_3<1\end{math} is attained at the points whose values are approximately
	\begin{align*}
	(x_1=0.025347732268,x_2=0.051425755177,x_3=0.788023120078)
	\end{align*}
	and
	\begin{align*}
	(x_1=0.025347732268, x_2=0.051425755177, x_3=0.135203392478).
	\end{align*}
	Note here that the two critical points are related by the symmetry between $x_3$ and $x_4$ in the asymptotic formula for \begin{math}c\big(A_5,S(n_1,n_2,n_3,n_4)\big)\end{math}.
	Thus we have
	\begin{align*}
	\max_{\substack{0<x_1,x_2,x_3<1 \\ x_1+x_2+x_3<1}}F(x_1,x_2,x_3) &\geq F(0.025347732268,0.051425755177,0.788023120078)\\
	& \approx 0.002058929182\,.
	\end{align*}
	
	The inequality
	\begin{align*}
	I_2(A_5)\geq 0.002058929182 \times 5! \approx 0.247071501785
	\end{align*}
	follows. This concludes the proof of the first part of the theorem.
	
	\begin{remark}\label{rem:minipoly}
		The precise lower bound is an algebraic number of degree $16$; its minimal polynomial is given in the appendix.
	\end{remark}
	
	\medskip
	For the upper bound, we make use of Theorem~\ref{Thm:forAlgo} which states that the maximum of $c(A_5,T)$ for binary trees $T$ with $n$ leaves can be determined purely by focusing on the sets $\cL(n)$ whose algorithmic description is given in Section~\ref{sec:ALGO}. Recall that by Theorem~3 in~\citep{AudaceStephanPaper1}, we have
	\begin{align*}
	I_2(A_5) \leq \max_{\substack{|T|=n \\ T~\text{binary tree}}}\gamma(A_5,T)
	\end{align*}
	for every $n \geq 5$. Thus we want to calculate the maximum for different values of $n$. When our algorithm terminates, the maximum number of copies of $A_5$ among all binary trees with $n$ leaves can be read off as the greatest $x$-coordinate (first coordinate) of the elements of $L(n)$, that is the $x$-coordinate of the very first element of $L(n)$---see the discussion before Theorem~\ref{Thm:forAlgo}.
	
	\medskip
	We have implemented this algorithm in Mathematica. The notebook can be accessed at \url{http://math.sun.ac.za/~swagner/TreeA5Final}. The precise values of
	\begin{align*}
	a_n =\max_{\substack{|T|=n \\ T~\text{binary tree}}}\gamma(A_5,T)
	\end{align*}
	have been computed for $n\leq 2000$---see Table~\ref{tab:treeA5}. It follows that 
	\begin{align*}
	I_2(A_5) \leq a_{2000} = \frac{32828685715097}{132667832500200}\approx 0.24745\,.
	\end{align*}
	This completes the proof of the theorem.

\begin{table}[h]\centering
	\caption {Maximum density $a_n$ of $A_5$ among $n$-leaf binary trees}\label{tab:treeA5} 
	\begin{tabular}{c | c |c |c |c |c |c |c |c }
		\hline
		$\boldsymbol{n}$ & 5 & 6 & 7 & 8 & 9 & 10 & 20 & 30 \\ \hline
		$a_n$ & $1$ & $\frac{1}{2}$ & $\frac{3}{7}$ & $\frac{11}{28}$ & $\frac{23}{63}$ & $\frac{1}{3}$ & $\frac{553}{1938}$ &  $\frac{19219}{71253}$ \Tstrut\Bstrut \\ \hline \hline
		$\boldsymbol{n}$ & 40 & 50 & 60 & 70 & 80 & 90 & 100 & 150  \\ \hline
		$a_n$ & $\frac{57793}{219336}$ & $\frac{550621}{2118760}$ &  $\frac{351943}{1365378}$ & $\frac{44899}{175406}$ & $\frac{6127045}{24040016}$ & $\frac{930032}{3662439}$ & $\frac{3177631}{12547920}$ & $\frac{24765738}{98600005}$  \Tstrut\Bstrut \\ \hline \hline
		$\boldsymbol{n}$ & 200 & 250 & 300 & 350 & 400 & 500 & 600 & 700  \\ \hline
		$a_n$ & $0.250153$ & $0.249543$ & $0.249142$ & $0.248854$ & $0.24864$ & $0.24834$ & $0.248143$ & $0.248001$ \Tstrut\Bstrut \\ \hline \hline
		$\boldsymbol{n}$ & 800 & 900 & 1000 & 1200 & 1400 & 1600 & 1800 & 2000 \\ \hline
		$a_n$ & $0.247894$ & $0.247812$ & $0.247747$ & $0.247648$ & $0.247577$ & $0.247524$ & $0.247483$ & $0.24745$ \Tstrut\Bstrut \\ \hline
	\end{tabular}
\end{table}
\end{proof}

\section{Proof of Theorem~\ref{accurate lower bound for P4}}\label{Proof2}

Let us first provide a proof of Proposition~\ref{prop: copy of P4}, which is an intermediate step in the proof of Theorem~\ref{accurate lower bound for P4}:

\begin{proof}[of Proposition~\ref{prop: copy of P4}]
	Let $k\geq 3$ and $d\geq k-1$ be fixed (note that $c\big(Q_k,CD^d_h \big)=0$ for $d<k-1$). For $h=1$, we have $c\big(Q_k,CD^d_h \big)=c(Q_k,S_d)=0$. Since for the case $h=1$, the statement holds trivially, we can safely assume $h\geq 2$ and proceed by induction on $h$.
	
	We distinguish possible cases that can happen for a subset of $k$ leaves of the tree $CD^d_h$: 
	\begin{itemize}
		\item all $k$ leaves belong to the same branch of $CD^d_h$. The total number of these subsets of leaves that induce the tree $Q_k$ is given by \begin{math}d \cdot c\big(Q_k,CD^d_{h-1} \big)\end{math}, as all the branches of $CD^d_h$ are isomorphic to $CD^d_{h-1}$;
		\item more than two of the branches of $CD^d_h$ contain at least one of the $k$ leaves. In this case the leaf-induced subtree is not isomorphic to $Q_k$ (as the root degree of $Q_k$ is $2$);
		\item exactly two of the branches of $CD^d_h$ contain at least two of the $k$ leaves each. In this case the leaf-induced subtree is not isomorphic to $Q_k$ (as one of the branches of $Q_k$ is the single leaf);
		\item one branch of $CD^d_h$ contains exactly one of the leaves and another branch of $CD^d_h$ contains $k-1$ leaves. Since $k>2$, the total number of these subsets of leaves that induce the tree $Q_k$ is given by \begin{math}2\cdot d^{h-1} \cdot c\big(S_{k-1},CD^d_{h-1}\big)\end{math} for every choice of two branches of $CD^d_h$.
	\end{itemize}
	Therefore, a recursion for $c\big(Q_k,CD^d_h\big)$ is given by
	\begin{align*}
	c\big(Q_k,CD^d_h \big)&=d \cdot c\big(Q_k,CD^d_{h-1} \big)+2 \dbinom{d}{2} \cdot d^{h-1} \cdot c\big(S_{k-1},CD^d_{h-1}\big)\\
	&=d \cdot c\big (Q_k,CD^d_{h-1} \big)+(d-1) d^h \binom{d}{k-1} \Bigg(\frac{d^{(k-1)(h-1)}-d^{h-1}}{d^{k-1}-d} \Bigg)\,,
	\end{align*}
	where the last step uses the identity
	\begin{align}\label{copyCkCDh}
	c\big(S_k,CD^d_h\big)= \binom{d}{k} \frac{d^{k\cdot h}-d^h}{d^k-d}
	\end{align}
	valid for every $k\geq 2$ -- formula~\eqref{copyCkCDh} can be found in~\citep[proof of Theorem~1]{AudaceStephanPaper1}. The induction hypothesis gives
	\begin{align*}
	c\big(Q_k,CD^d_h \big)=~&\frac{(d-1) \binom{d}{k-1}}{d^{k-1}-d}\cdot d^h \Bigg(\frac{d^{(k-1)(h-1)}-d^{k-1}}{d^{k-1}-1}-\frac{d^{h-1}-d}{d-1} \Bigg)\\
	&~~~~ +(d-1) d^h \binom{d}{k-1} \Bigg(\frac{d^{(k-1)\cdot (h-1)} -d^{h-1}}{d^{k-1}-d} \Bigg)\\
	=~&\frac{(d-1) \binom{d}{k-1}}{d^{k-1}-d}\cdot d^h \Bigg( d^{(k-1)(h-1)} +\frac{d^{(k-1)(h-1)}-d^{k-1}}{d^{k-1}-1}\\
	& \hspace*{5cm} -\frac{d^{h-1}-d}{d-1} -d^{h-1}\Bigg)\,,
	\end{align*}
	which, after simplification, yields the desired equality. The statement on the inducibility follows by passing to the limit of the density $\gamma\big(Q_k,CD^d_h\big)$ as $h \to \infty$:
	\begin{align*}
	I_d(Q_k)\geq \lim_{h \to \infty}\gamma\big(Q_k,CD^d_h\big)=\frac{k!(d-1) \binom{d}{k-1}}{(d^{k-1}-d) (d^{k-1}-1)}\,.
	\end{align*}
\end{proof}

We can now focus on Theorem~\ref{accurate lower bound for P4}.

\begin{proof}[of Theorem~\ref{accurate lower bound for P4}] 
	First off, we construct a new family of ternary trees: given a nonnegative integer $h\geq 0$, attach one copy of each of the complete ternary trees $CD^3_h$ and $CD^3_{h+1}$ to a common vertex (their respective roots are joined to a new vertex) to form a ternary tree which we shall name $W^3_{h}$. For example, $W^3_0$ is the tree $Q_4$. See also Figure~\ref{TreeW3h} for the ternary tree $W^3_1$.
	\begin{figure}[htbp]\centering  
		\begin{tikzpicture}[thick,level distance=11.5mm]
		\tikzstyle{level 1}=[sibling distance=35mm]
		\tikzstyle{level 2}=[sibling distance=12mm]
		\tikzstyle{level 3}=[sibling distance=4mm]
		\node [circle,draw]{}
		child {child {[fill] circle (2pt)}child {[fill] circle (2pt)}child {[fill] circle (2pt)}}
		child {child {child {[fill] circle (2pt)}child {[fill] circle (2pt)}child {[fill] circle (2pt)}}child {child {[fill] circle (2pt)}child {[fill] circle (2pt)}child {[fill] circle (2pt)}}child {child {[fill] circle (2pt)}child {[fill] circle (2pt)}child {[fill] circle (2pt)}}};
		\end{tikzpicture}
		\caption{The ternary tree $W^3_1$ defined in the proof of Theorem~\ref{accurate lower bound for P4}.}\label{TreeW3h}
	\end{figure}
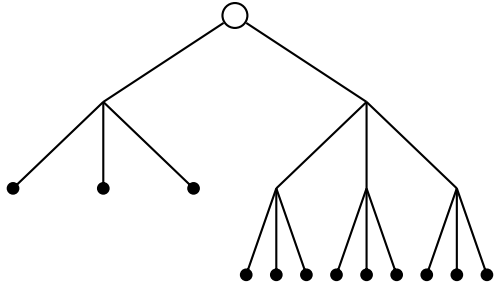
	
	We shall prove that 
	\begin{align*}
	\lim_{h\to \infty} \gamma \big(Q_4,W^3_h \big)=\frac{59}{416}\,.
	\end{align*}
	
	To justify the specific choice, let us consider a more general construction. For positive integers $n_1$ and $n_2$, we consider the ternary tree (which we simply denote by $T_{n_1,n_2}$) whose branches are even ternary trees with $n_1$ and $n_2$ leaves, respectively. The even ternary tree $E^3_n$ with $n$ leaves is obtained recursively as follows: $E^3_1$ is the tree with only one vertex; $E^3_2$ is the star with two leaves; for $n>2$, the branches of $E^3_n$ are the even ternary trees $E^3_{k_1},~E^2_{k_2}$ and $E^3_{k_3}$ with $k_1,k_2,k_3$ as equal as possible and $k_1+k_2+k_3=n$.
	
	\medskip
	According to Proposition~\ref{prop: copy of P4}, we have
	\begin{align*}
	c\big(Q_4,CD^d_h\big)=\frac{(d-2)d^{4\cdot h}}{6(d+1)(d^2+d+1)}+\mathcal{O}(d^{2\cdot h})
	\end{align*} 
	for every $d$ and all $h\geq 3$. In particular, the asymptotic formula
	\begin{align*}
	c\big(Q_4,CD^3_h\big)=\frac{1}{312}\cdot 3^{4\cdot h}+\mathcal{O}(3^{2\cdot h})
	\end{align*}
	is obtained for all $h\geq 3$. On the other hand, we recall that the specialisation $k=3$ in equation~\eqref{copyCkCDh} of the proof of Proposition~\ref{prop: copy of P4} gives
	\begin{align*}
	c(S_3,CD^3_h)=\frac{1}{24}\cdot 3^{3h}+\mathcal{O}(3^h)
	\end{align*}
	for all $h\geq 1$, and employing the identity
	\begin{align*}
	c(S_3,E^3_n)=c(S_3,E^3_{k_1})+c(S_3,E^3_{k_2})+c(S_3,E^3_{k_3})+k_1\cdot k_2 \cdot k_3\,,
	\end{align*}
	it is not difficult to show that 
	\begin{align*}
	c(S_3,E^3_n)=\frac{1}{24}\cdot n^3+\mathcal{O}(n^2)
	\end{align*}
	for all $n$. Using the recursion
	\begin{align*}
	c\big(Q_4,E^3_n\big)&=c\big(Q_4,E^3_{k_1}\big)+c\big(Q_4,E^3_{k_2}\big)+c\big(Q_4,E^3_{k_3}\big)+k_1 \cdot c\big(S_3,E^3_{k_2}\big)+k_2 \cdot c\big(S_3,E^3_{k_1}\big)\\
	&+k_1 \cdot c\big(S_3,E^3_{k_3}\big) +k_3 \cdot c\big(S_3,E^3_{k_1}\big)+k_2 \cdot c\big(S_3,E^3_{k_3}\big)+k_3 \cdot c\big(S_3,E^3_{k_2}\big)\,,
	\end{align*}
	we also find that
	\begin{align*}
	c\big(Q_4,E^3_n\big)=\frac{1}{312}\cdot n^4 +\mathcal{O}(n^3)
	\end{align*}
	for all $n$. Moreover, the number of copies of $Q_4$ in any topological tree $T$ with two branches $T_1,T_2$ is given by
	\begin{align*}
	c(Q_4,T)=c(Q_4,T_1)+c(Q_4,T_2)+|T_1|\cdot c(S_3,T_2)+|T_2|\cdot c(S_3,T_1)\,.
	\end{align*}
	
	For $x \in (0,1)$, set $n_1 = \lfloor x\cdot n \rfloor$ and $n_2 = \lfloor (1-x)n \rfloor$, and let $n \to \infty$. Combining all the formulas above, we see that an asymptotic formula for $c\big(Q_4,T_{n_1,n_2}\big)$ is given by
	\begin{align*}
	c\big(Q_4,T_{n_1,n_2}\big)&=\frac{1}{312}(x\cdot n)^4 +\frac{1}{312}\big((1-x) n\big)^4\\
	&~~ +x\cdot n \cdot \frac{1}{24}\big((1-x) n\big)^3 +(1-x) n \cdot \frac{1}{24} (x\cdot n)^3  + \mathcal{O}(n^3)\\
	&=\frac{1}{312} \big(x^4+(1-x)^4\big) n^4 +\frac{1}{24} \big(x (1-x)^3+(1-x) x^3\big)n^4 +\mathcal{O}(n^3)\\
	&=\frac{1}{312} \big( 1 + 9x - 33x^2 + 48x^3 - 24x^4 \big)n^4 + \mathcal{O}(n^3)\,.
	\end{align*}
	
	Set
	\begin{align*}
	f(x)=\frac{1}{312} \big( 1 + 9x - 33x^2 + 48x^3 - 24x^4 \big)\,.
	\end{align*}
	The first derivative of this function is given by
	\begin{align*}
	f^{\prime}(x)=\frac{-(2x-1)(4x-3)(4x-1)}{104}\,.
	\end{align*}
	We see that $f(x)$ attains its maximum at $x= 1/4$ (or $x=3/4$):
	\begin{align*}
	f(x)\leq f\Big(\frac{1}{4}\Big)=\frac{59}{9984}
	\end{align*}
	for all $x \in (0,1)$. This motivates the choice of the trees $W^3_h$ defined before. We have
	\begin{align*}
	I_3(Q_4)\geq \lim_{h\to \infty} \gamma\big(Q_4,W^3_h\big)= \frac{4!\cdot 59}{9984}=\frac{59}{416}\,.
	\end{align*}
	This completes the proof of the lower bound in the theorem.
	
	\medskip
	The proof of the upper bound is also via an algorithmic approach and is quite similar to the one given for the binary tree $A_5$ in Section~\ref{sec:ALGO}. 
	Recall again that by Theorem~3 in~\citep{AudaceStephanPaper1}, 
	\begin{align*}
	I_3(Q_4) \leq \max_{\substack{|T|=n \\ T~\text{ternary tree}}}\gamma(Q_4,T),
	\end{align*}
	so the aim is to compute the right hand side for different values of $n$. The algorithm is essentially the same as for $A_5$, with the trees $Q_4$ and $S_3$ assuming the roles of $A_5$ and $CD_2^2$ respectively. The only difference is that trees with two or three branches have to be considered in the construction of the sets $\cL(n)$.
	
	For the recursive calculation of $c(Q_4,T)$ and $c(S_3,T)$, we have the formulas
	\begin{align*}
	c(Q_4,T)&=c(Q_4,T_1)+c(Q_4,T_2)+c(Q_4,T_3)+|T_1|\cdot c(S_3,T_2)+|T_2|\cdot c(S_3,T_1)\\
	&+|T_1|\cdot c(S_3,T_3)+|T_3|\cdot c(S_3,T_1)+|T_2|\cdot c(S_3,T_3)+|T_3|\cdot c(S_3,T_2)
	\end{align*}
	and
	\begin{align*}
	c(S_3,T)=c(S_3,T_1)+c(S_3,T_2)+c(S_3,T_3)+|T_1|\cdot |T_2|\cdot |T_3|\,,
	\end{align*}
	where $T_1,T_2,T_3$ are the branches of $T$. If there are only two branches, all terms involving $T_3$ can simply be left out.
	
	\medskip
	Again, we have implemented the algorithm in Mathematica---the notebook can be found at \url{http://math.sun.ac.za/~swagner/TreeQ4Final}. The exact values of
	\begin{align*}
	b_n =\max_{\substack{|T|=n \\ T~\text{ternary tree}}}\gamma(Q_4,T)
	\end{align*}
	have been determined for values of $n$ up to $500$; see Table~\ref{tab:treeP4}. 
	
	\begin{table}[h]\centering
		\caption {Maximum density $b_n$ of $Q_4$ among $n$-leaf ternary trees}\label{tab:treeP4} 
		\begin{tabular}{c | c |c |c |c |c |c |c |c |c |c }
			\hline
			$n$ & 4 & 5 & 6 & 7 & 8 & 9 & 10  \\ \hline
			$\boldsymbol{b_n}$ & $1$ & $\frac{2}{5}$ & $\frac{2}{5}$ & $\frac{2}{7}$ & $\frac{19}{70}$ & $\frac{5}{21}$ & $\frac{5}{21}$ \Tstrut\Bstrut \\ \hline \hline
			$n$ & 15 & 20 & 25 & 30 & 35 & 40 & 45 \\ \hline
			$\boldsymbol{b_n}$ & $\frac{18}{91}$ & $\frac{291}{1615}$ & $\frac{1103}{6325}$ & $\frac{172}{1015}$ & $\frac{1097}{6545}$ & $\frac{7452}{45695}$ & $\frac{7948}{49665}$ \Tstrut\Bstrut \\ \hline \hline
			$n$ & 50 & 60 & 70 & 80 & 90 & 100 & 150 \\ \hline 
			$\boldsymbol{b_n}$ & $0.158072$ & $0.155422$ &  $0.153588$ & $0.152096$ & $0.150978$ & $0.150264$ & $0.147342$ \Tstrut\Bstrut \\ \hline \hline
			$n$ & 200 & 250 & 300 & 350 & 400 & 450 & 500 \\ \hline 
			$\boldsymbol{b_n}$ & $0.145967$ & $0.145195$ &  $0.144651$ & $0.144239$ & $0.143931$ & $0.143691$ & $0.143506$  \Tstrut\Bstrut \\ \hline \hline
		\end{tabular}
	\end{table}
	
	We conclude that      			 		
	\begin{align*}
	I_3(Q_4) \leq b_{500}= \frac{73848853}{514606225} \approx 0.143506\,.
	\end{align*}
	This completes the proof of the theorem.
\end{proof}

\section*{Conclusion}

Naturally, the main open question left for us is

\begin{question}
	What are the precise values of $J(A_5) = I_2(A_5)$ and $I_3(Q_4)$?
\end{question}

It is conceivable that the constructions yielding our lower bounds are asymptotically optimal, in which case the lower bounds would in fact be exact. A characterisation (at least an approximate characterisation) of the trees that attain the maxima of $\gamma(A_5,T)$ and $\gamma(Q_4,T)$ would be highly desirable.

\medskip

Observe that the only known exact values of the inducibility are rational numbers. It was already asked in \citep{czabarka2016inducibility} whether the inducibility of trees (binary trees in that specific paper) is always rational. A natural generalisation would be the following question:

\begin{question}
	Are $I_d(S)$ and $J(S)$ rational for every rooted tree $S$ (and every integer $d \geq 2$)?
\end{question}

The tree $A_5$ that we studied in this paper would be a natural candidate for a counterexample. For instance, our bounds show that $J(A_5) = I_2(A_5)$, if rational, would have to have a denominator of at least $89$. Thus an answer to the first question might immediately imply an answer to our second question.

\medskip
Another natural direction of further research would be to search for other examples of trees whose inducibility can be determined explicitly (especially if this turns out to be too difficult for the trees $A_5$ and $Q_4$).

\medskip
Finally, we would like to mention that there are other small binary trees $B$ for which the same algorithm described for the tree $A_5$ (see Section~\ref{sec:ALGO}) can be exploited to determine the maximum number of copies of $B$ in a binary tree with $n$ leaves, thus an upper bound on $I_2(B)=J(B)$. Let $B_1$ and $B_2$ denote the two branches of $B$, and $B_{2,1},B_{2,2}$ the two branches of $B_2$. If \begin{math}1 \leq |B_1|,|B_{2,1}|,|B_{2,2}|\leq 3\end{math}, then one can apply the same algorithm with only very minor modifications. There are $17$ nonisomorphic binary trees satisfying this criterion. The inducibility is presently known explicitly for only $7$ of these trees.

\section*{Appendix}

The minimal polynomial of the lower bound derived in Section~\ref{SectProofA5} (cf. Remark~\ref{rem:minipoly}) is

\begin{align*}
&-219990282547586266429960528777627452703544325176405813669341 \\
&+  14602043726049732276047519980572925148798805701655918709812280 x \\
&-  443988064886113118898743858593495837271116452775244720945246560 x^2 \\
&+ 8191391786597997025923387108156673725457502710845806254299710400 x^3 \\
&- 102349758416566196856322057341155143983386721744416045107152357780 x^4 \\
&+ 914915733104054427549320025907848536757859198555663276682925151464 x^5 \\
&- 6021742541574757636997532900244251351617306701953896563191732661600 x^6 \\
&+ 29555503633329799978352177635679651562063414854261341767132451211440 x^7 \\
&- 108203641960712037979399490009159473059103361912934081445369533569710 x^8 \\
&+ 291888020622671692818879080374814508443375281580239785514869152091240 x^9 \\
&- 563951453122800910206893287609142225349017486796615704590365725141664 x^{10} \\
&+ 738771836341212349165479496191602729493266527262931365590196142374880 x^{11} \\
&- 587213314414708394727507148742441667728136148596591603986693048673940 x^{12} \\
&+ 211982553160494718288945301048132425769769562499489523070365996462520 x^{13} \\
&+ 2642044260670867601071997587023204415009550503501324422149424398240 x^{14} \\
&+ 2298958777082465800903908865165860713406872658260092533772950000 x^{15} \\
&+ 563916767637963643123242260073274239273437824576171875 x^{16}.
\end{align*}

\end{document}